%

\documentclass[aop,MSNbibl,citesort,seceqn,dvips]{arximspdf}
\usepackage{mathrsfs}


\doi{10.1214/10-AOP626}
\volume{40}
\issue{2}
\pubyear{2012}
\firstpage{788}
\lastpage{812}

\makeatletter

\newcommand{\arxivurl}[1]{\href{http://arxiv.org/abs/#1}{#1}}

\newcommand{\fraca}[2]{{#1/#2}}

\newcommand{\fracc}[2]{{#1/(#2)}}
\newcommand{\fracd}[2]{{(#1/#2)}}

\newcommand{\eqref}[1]{(\ref{#1})}
\newtheorem{theorem}{Theorem}[section]
\newproclaim{remark}[theorem]{Remark}
\newproclaim{example}[theorem]{Example}
\newtheorem{lemma}[theorem]{Lemma}
\newtheorem{proposition}[theorem]{Proposition}


\def\R{{\mathbb R}}
\def\C{{\mathbb C}}


\newcommand{\E}{{\mathbb E}}
\renewcommand{\P}{{\mathbb P}}
\newcommand{\F}{{\mathscr F}}


\renewcommand{\a}{\alpha}
\newcommand{\g}{\gamma}
\renewcommand{\d}{\delta}
\renewcommand{\l}{\lambda}
\renewcommand{\O}{\Omega}


\newcommand{\calL}{{\mathscr L}}
\newcommand{\one}{{\mathbf{1}}}
\newcommand{\calA}{\mathscr{A}}
\makeatother

\begin{document}
\begin{frontmatter}

\title{Stochastic maximal $L^{p}$-regularity}
\runtitle{Stochastic maximal $L^{p}$-regularity}

\begin{aug}
\author[A]{\fnms{Jan} \snm{van Neerven}\thanksref{t1}\ead[label=e1]{J.M.A.M.vanNeerven@tudelft.nl}},
\author[A]{\fnms{Mark} \snm{Veraar}\corref{}\thanksref{t2}\ead[label=e2]{M.C.Veraar@tudelft.nl}}
\and
\author[B]{\fnms{Lutz} \snm{Weis}\thanksref{t3}\ead[label=e3]{weis@math.uka.de}}
\runauthor{J. van Neerven, M. Veraar and L. Weis}
\affiliation{Delft University of Technology,
Delft University of Technology
and~Karlsruhe~Institute~of Technology}
\address[A]{J. van Neerven\\
M. Veraar\\
Delft Institute of Applied Mathematics\\
Delft University of Technology \\
P.O. Box 5031\\
2600 GA Delft\\
The Netherlands\\
\printead{e1}\\
\hphantom{E-mail: }\printead*{e2}} 
\address[B]{L. Weis\\
Institut f\"{u}r Analysis \\
Karlsruhe Institute of Technology \\
D-76128 Karlsruhe\\
Germany\\
\printead{e3}}
\end{aug}
\thankstext{t1}{Supported by VICI subsidy 639.033.604 of the
Netherlands Organisation for Scientific Research (NWO).}
\thankstext{t2}{Supported by the Alexander von Humboldt foundation and
VENI subsidy 639.031.930
of the Netherlands Organisation for Scientific Research (NWO).}
\thankstext{t3}{Supported by   Deutsche
Forschungsgemeinschaft Grant We 2847/1-2.}

\received{\smonth{6} \syear{2010}}
\revised{\smonth{10} \syear{2010}}

%
\begin{abstract}
In this article we prove a maximal $L^p$-regularity result for stochastic
convolutions, which extends Krylov's basic mixed $L^p(L^q)$-inequali\-ty
for the Laplace operator on $\R^d$ to large classes of elliptic
operators, both
on $\R^d$ and on bounded domains in $\R^d$ with various boundary
conditions. Our
method of proof is based on McIntosh's $H^\infty$-functional calculus,
$R$-boundedness techniques and sharp $L^p(L^q)$-square function
estimates for stochastic
integrals in $L^q$-spaces. Under an additional invertibility assumption
on $A$,
a maximal space--time $L^p$-regularity result is obtained as well.
\end{abstract}

%
\begin{keyword}[class=AMS]
\kwd[Primary ]{60H15}
\kwd[; secondary ]{35B65}
\kwd{35R60}
\kwd{42B25}
\kwd{42B37}
\kwd{47A60}
\kwd{47D06}.
\end{keyword}
\begin{keyword}
\kwd{Stochastic maximal $L^p$-regularity}
\kwd{stochastic convolutions}
\kwd{stochastic partial differential equations}
\kwd{$H^\infty$-calculus}
\kwd{square function}
\kwd{$R$-boundedness}.
\end{keyword}

\end{frontmatter}

\section{Introduction}\label{sec1}

Let $S = (S(t))_{t\ge0}$ denote the heat semigroup on $L^p(\R^d)$,
\[
S(t)f(x) = \frac1{\sqrt{(2\pi t)^d}}\int_{\R^d}
e^{-(x-y)^2/2t}f(y)\,dy,
\]
and let $H$ be a Hilbert space. Generalizing the classical Littlewood--Paley
inequality, Krylov \cite{Kry94,Kry00,Kry06} proved that for $p\in
[2,\infty)$
and all $G\in L^p(\R_+\times\R^d; H)$ one has
%
\begin{eqnarray}\label{eq:Kry}
&&\int_0^\infty\!\!\int_{\R^d} \biggl (\int_0^t \| [\nabla
S(t-s)G(s,\cdot)](x)\| ^2\,ds  \biggr)^{p/2}\,dx \,dt
\nonumber
\\[-8pt]
\\[-8pt] && \qquad  \le C_p^p \|
G\| _{L^p(\R_+\times\R^d; H)}^p
\nonumber
\end{eqnarray}
and more generally, for $p,q\in[2,\infty)$ with $q\leq p$
and $G\in L^p(\R_+; L^q(\R^d; H))$,
%
\begin{eqnarray}\label{eq:Kry-pq}
&&\int_0^\infty \biggl(\int_{\R^d}  \biggl(\int_0^t \| [\nabla
S(t-s)G(s,\cdot)](x)\| ^2\,ds  \biggr)^{q/2}\,dx \biggr)^{p/q}\,dt
\nonumber
\\[-8pt]
\\[-8pt]
& & \qquad \le C_{p,q}^p \|
G\| _{L^p(\R_+; L^q(\R^d; H))}^p.
\nonumber
\end{eqnarray}
In both \eqref{eq:Kry} and \eqref{eq:Kry-pq} we implicitly use the
extension of $S(t)$ to $L^p(\R^d; H)$ and $L^q(\R^d; H)$, respectively
(see the remarks preceding Theorem \ref{thm:maxregLp}).
These singular convolution estimates are the cornerstone of Krylov's
$L^q$-theory of stochastic PDEs \cite{Kry94,Kry96,Kry,Kry00,Kry06,KryOverview}.
The proofs of \eqref{eq:Kry} and \eqref{eq:Kry-pq} rely heavily on
techniques from harmonic analysis, and their extension to bounded domains
is a well-known open problem. The aim of the present paper is to prove
a far-reaching generalization
of Krylov's inequalities which, among other things, provides such an extension.
Our approach is radically different from Krylov's and uses
$H^\infty$-calculus estimates, developed by McIntosh and coauthors,
$R$-boundedness techniques and sharp $L^p(L^q)$-square function
estimates for stochastic integrals in $L^q$-spaces.

In order to state the main result we need to introduce some terminology.
Let $(\Omega,\mathscr{A},\P)$ be a probability space endowed with a
filtration
$\F= (\F_t)_{t\ge0}$,
and let $(W_H(t))_{t\geq0}$ be a cylindrical $\F$-Brownian motion on
$H$ (see Section~\ref{sec:stochint}). Furthermore let $(\mathcal{O},\Sigma,\mu)$
be an arbitrary $\sigma$-finite measure space.

\begin{theorem}\label{thm:main} Let $q\in[2,\infty)$,
suppose the operator $A$ admits a
bounded $H^\infty$-calculus of angle less than $\pi/2$ on
$L^q(\mathcal{O})$, and let
$(S(t))_{t\ge0}$ denote the bounded analytic semigroup
on $L^q(\mathcal{O})$ generated by ${-}A$.
For all $\F$-adapted $G\in L^p(\R_+\times\Omega;L^q(\mathcal{O};H))$
the stochastic convolution process
\[
U(t) = \int_0^t S(t-s)G(s)\,dW_H(s), \qquad t\ge0,
\]
is well defined in $L^q(\mathcal{O})$, takes values in the fractional domain
$D(A^\fraca12)$ almost surely
and for all $2<p<\infty$ we have the stochastic maximal $L^p$-regularity
estimate
%
\begin{equation}\label{eq:Apq}
\E\| A^{\fraca12} U\| _{L^p(\R_+;L^q(\mathcal{O}))}^p \le C^p \E\|
G\| _{L^p(\R_+;L^q(\mathcal{O};H))}^p
\end{equation}
with a constant $C$ independent of $G$. For $q=2$ this estimate also
holds with
$p=2$.
\end{theorem}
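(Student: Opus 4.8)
\emph{Overall strategy.} The plan is to pass from the stochastic estimate \eqref{eq:Apq} to a deterministic one by means of the It\^o isomorphism for stochastic integrals in UMD spaces -- in the form, for $L^q$-spaces, of the sharp $L^p(L^q)$-square-function estimate for stochastic integrals -- and then to prove the deterministic estimate using the $R$-bounded $H^\infty$-calculus of $A$. Two preliminary reductions: by density it suffices to prove \eqref{eq:Apq} for adapted step processes $G$, since for such $G$ the process $U$ is a genuine $L^q(\mathcal O)$-valued stochastic integral and, $(S(t))_{t\ge0}$ being analytic, $U(t)\in D(A^{1/2})$ for every $t$; the a priori bound then extends to all admissible $G$ and at the same time gives the a.s.\ membership $U(t)\in D(A^{1/2})$. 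Moreover one may absorb $H$ into the state space, replacing $L^q(\mathcal O)$ by $\gamma(H,L^q(\mathcal O))\cong L^q(\mathcal O;H)$, on which $A\otimes I_H$ still has a bounded $H^\infty$-calculus of angle $<\pi/2$ (being an $L^q$-space of Hilbert-space-valued functions); for readability I keep $H$ explicit.

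\emph{Reduction to a deterministic square-function estimate.} Fix $t>0$. Since $L^q(\mathcal O)$ is UMD (with $q\ge2$), the It\^o isomorphism and the identification $\gamma(L^2(\R_+;H),L^q(\mathcal O))=L^q(\mathcal O;L^2(\R_+;H))$ give
\[
\E\bigl\|A^{1/2}U(t)\bigr\|_{L^q(\mathcal O)}^p\;\lesssim\;\E\,\Bigl\|\Bigl(\int_0^t\bigl\|[A^{1/2}S(t-s)G(s)](\cdot)\bigr\|_H^2\,ds\Bigr)^{1/2}\Bigr\|_{L^q(\mathcal O)}^p .
\]
Integrating in $t$ and using Fubini reduces Theorem~\ref{thm:main} to the deterministic estimate: for every compactly supported step function $g:\R_+\to L^q(\mathcal O;H)$,
\[
\int_0^\infty\Bigl\|\Bigl(\int_0^t\bigl\|[A^{1/2}S(t-s)g(s)](\cdot)\bigr\|_H^2\,ds\Bigr)^{1/2}\Bigr\|_{L^q(\mathcal O)}^p\,dt\;\le\;C^p\,\|g\|_{L^p(\R_+;L^q(\mathcal O;H))}^p .
\]
For $q=2$ the space $L^2(\mathcal O)$ is Hilbert and the It\^o isomorphism is an isometry at $p=2$, so the last estimate follows, via the It\^o isometry and Fubini, from McIntosh's square-function estimate for the sectorial operator $A$ on the Hilbert space $L^2(\mathcal O;H)$; this settles the case $p=q=2$.

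\emph{Proof of the deterministic estimate.} Here I would use that, $L^q$ having Pisier's property $(\alpha)$, the bounded $H^\infty(\Sigma_\sigma)$-calculus of $A$ ($\sigma<\pi/2$) is automatically $R$-bounded, so that $\{(uA)^{1/2}S(u):u>0\}$ and $\{A^{1/2}(i\xi+A)^{-1}:\xi\in\R\}$ are $R$-bounded. After extending to two-sided time and taking the Fourier transform in $t$, the deterministic estimate becomes an operator-valued Fourier-multiplier bound on $L^p(\R;L^q(\mathcal O;H))$ with $R$-bounded symbol $\xi\mapsto A^{1/2}(i\xi+A)^{-1}$ acting into the $\gamma$-space, which I would prove with a $\gamma$-valued version of the operator-valued Mikhlin multiplier theorem. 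The structural reason behind this is the sum-of-operators principle: on $L^p(\R_+;L^q(\mathcal O;H))$ the time derivative $\mathcal A=\tfrac{d}{dt}$ (with zero initial value) has a bounded $H^\infty$-calculus of angle $\pi/2$, while $\mathcal B:=A\otimes I$ is $R$-sectorial of angle $<\pi/2$; since the angles add to less than $\pi$, the Kalton--Weis sum theorem applies and, in particular, $\mathcal B^{1/2}\mathcal A^{1/2}(\mathcal A+\mathcal B)^{-1}$ is bounded -- morally the operator sending $g$ to $A^{1/2}U$, with the stochastic integration against $dW_H$ contributing the ``missing'' half-derivative $\mathcal A^{1/2}$. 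Finiteness of the intermediate quantities is covered by the density reduction of the first step.

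\emph{Main obstacle.} The crux -- and the step I expect to be hardest -- is the borderline character of the convolution kernel $u\mapsto A^{1/2}S(u)=u^{-1/2}\psi(uA)$, $\psi(z)=z^{1/2}e^{-z}$: its operator norm is of order $u^{-1/2}$, which is \emph{just} not square-integrable near $u=0$. Consequently no crude norm estimate can work, and the scalar weight $u^{-1/2}$ may not be pulled out of the $\gamma$-norm on its own -- this produces a divergent integral, just as writing $\mathcal B^{1/2}(\mathcal A+\mathcal B)^{-1}=\bigl[\mathcal B^{1/2}\mathcal A^{1/2}(\mathcal A+\mathcal B)^{-1}\bigr]\,\mathcal A^{-1/2}$ splits off the non-$\gamma$-summing factor $\mathcal A^{-1/2}$. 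One must instead keep $u^{-1/2}$ paired with the smoothing operator $\psi(uA)$ and exploit the decay of $\psi(uA)$ as $u\to0$ together with the $R$-boundedness of $\{\psi(uA):u>0\}$ and the geometry of $L^q$ (UMD, property $(\alpha)$) -- equivalently, one must make rigorous at the level of $\gamma$-norms that integration against $dW_H$ absorbs exactly half a derivative in time. The hypothesis $q\ge2$ is the one already present in Krylov's inequalities \eqref{eq:Kry} and \eqref{eq:Kry-pq}; the improvement here is that the constraint $q\le p$ of \eqref{eq:Kry-pq} is removed, at the cost of requiring $p>2$ (rather than $p\ge2$) when $q>2$.
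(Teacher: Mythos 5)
Your first two steps are sound and match the paper's framework: the density reduction, the use of the two-sided It\^o isomorphism of Proposition \ref{prop:NVW} to recast \eqref{eq:Apq} as the deterministic $L^p(L^q(L^2))$ square-function estimate (this is exactly \eqref{eq:Kry-A}), and the treatment of $p=q=2$ via the It\^o isometry and McIntosh's square-function estimate (cf.\ Remark \ref{rem:Hilbert}). You have also correctly located the crux in the borderline kernel $A^{1/2}S(u)=u^{-1/2}\varphi(uA)$. But the argument you offer for the deterministic estimate does not close that gap. The Kalton--Weis sum theorem gives boundedness of $\mathcal B^{1/2}\mathcal A^{1/2}(\mathcal A+\mathcal B)^{-1}$ on $L^p(\R_+;L^q(\mathcal O;H))$ for \emph{all} $p\in(1,\infty)$, whereas the square-function estimate you need fails for $p=2$, $q>2$, even for deterministic scalar $G$ (Section \ref{sec:counter}); so the heuristic that ``$dW_H$ contributes the missing $\mathcal A^{1/2}$'' is precisely the unproven step, not a consequence of the sum theorem. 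The Fourier-multiplier route fares no better in the form you describe: after the substitution $s\mapsto t-u$ the relevant symbol is $m(\xi)x=\bigl(u\mapsto e^{-iu\xi}A^{1/2}S(u)x\bigr)$, mapping $L^q(\mathcal O;H)$ into $L^q(\mathcal O;L^2(\R_+;H))$ --- not $A^{1/2}(i\xi+A)^{-1}$, which is the symbol of the \emph{deterministic} maximal regularity operator. Each $m(\xi)$ is bounded by the square-function estimate, but $\xi m'(\xi)x=\bigl(u\mapsto -i\xi u\,e^{-iu\xi}A^{1/2}S(u)x\bigr)$ is not even uniformly bounded (on a spectral vector with $Ax=\lambda x$ its norm is of order $|\xi|/\lambda$), so Mikhlin's condition fails and no property-$(\alpha)$ upgrade of boundedness to $R$-boundedness can repair it. Tellingly, nothing in your sketch uses $p>2$; if it worked as stated it would prove a statement that the paper shows to be false.

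What is missing --- and what the paper supplies --- is a genuinely stochastic ingredient: the $R$-boundedness, from $L^p_\F(\R_+\times\O;L^q(\mathcal O;H))$ to $L^p(\R_+\times\O;L^q(\mathcal O))$, of the family of normalized sliding averages $J(r)G(t)=r^{-1/2}\int_{(t-r)\vee 0}^t G\,dW_H$ (Theorem \ref{thm:J-Lq}), proved via the two-sided It\^o isomorphism, Kahane--Khintchine, and a duality argument with the Fefferman--Stein maximal theorem on $L^{p/2}(L^{q/2})$ --- which is exactly where $p>2$ enters. The singular kernel is then handled not by Fourier analysis but by the Poisson representation of Lemma \ref{lem:poisson}, which writes $A^{1/2}S(t-s)=\int_0^\infty k_\alpha(u,t-s)V(u)\,\frac{du}{u}$ with $V(u)$ built from squares of $H^\infty_0$-functions of $uA$, so that $A^{1/2}S\diamond G=\int_0^\infty V(u)\,I(u)G\,\frac{du}{u}$ with $I(u)$ in the $R$-bounded class; the proof is completed by duality, using the square-function estimates of Proposition \ref{prop:Hinftysquare} for both $A$ and $A^*$ together with the multiplier result of Proposition \ref{prop:KW}. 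Your reduction and your ``main obstacle'' paragraph are compatible with this scheme, but as it stands the proposal stops exactly where the proof has to begin.
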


Although $U$ also belongs to $L^p((0,T)\times\Omega;L^q(\mathcal{O}))$
for all $T\in(0,\infty)$, in general it is
false that $U$ belongs to $ L^p(\R_+\times\Omega;L^q(\mathcal{O}))$
unless one makes the additional assumption that $A$ is invertible
[see Theorem \ref{thm:maxregintro}(1) with $\theta=0$].

The limiting case $p=2$ is not allowed in Theorem \ref{thm:main}
(except if $q=2$); a~counterexample is presented in
Section \ref{sec:counter}. This is rather surprising, since $p=2$ is usually
the ``easy'' case. Theorem \ref{thm:main} is new even for $q=2$ and
$p\in(2,
\infty)$.

The convolution process $U$
is the mild solution of the abstract stochastic PDE
\[
dU(t) + AU(t)\,dt = G(t)\,dW_H(t), \qquad t\ge0,
\]
and therefore Theorem \ref{thm:main}
can be interpreted as maximal $L^p$-regularity results for such equations.
As is well known \cite{Brz1,DPZ,Kry} (cf. Section
\ref{sec:discussion}), stochastic maximal regularity estimates can be combined
with fixed point arguments to
obtain existence, uniqueness and regularity results for solutions to
more general classes of nonlinear stochastic PDEs.
This approach has proved very fruitful in the setting of deterministic PDEs,
as can be seen from the surveys \cite{DHP,KuWe}.
In order to keep the present
paper at a reasonable length, such applications to stochastic PDEs
have been worked out in a separate paper \cite{NVW11}.
A generalization of estimate \eqref{eq:Kry} to the setting of
stochastic integrodifferential equations has been obtained in \cite{DeLo09};
our approach seems to be applicable in this context as well.

Let us now briefly indicate how \eqref{eq:Kry} and \eqref{eq:Kry-pq}
follow from Theorem \ref{thm:main} and how the corresponding estimates for
bounded regular domains may be deduced.
First of all, by the It\^{o} isomorphism
for $L^q(\mathcal{O})$-valued stochastic
integrals (see Section \ref{sec:stochint}),
the estimate \eqref{eq:Apq} can be rewritten as
%
\begin{eqnarray}\label{eq:Kry-A}
&&\E\int_0^\infty   \biggl(\int_{\mathcal{O}}  \biggl(\int_0^t \|
[A^{\fraca12}
S(t-s)G(s,\cdot)](x)\|^2\,ds  \biggr)^{q/2}\,dx \biggr)^{p/q}\,dt
\nonumber
\\[-8pt]
\\[-8pt]
 && \qquad  \leq C^p \E
\|G\|_{L^p(\R_+;L^q(\mathcal{O}; H))}^p.
\nonumber
\end{eqnarray}
It is well known that the Laplace operator $-\frac12\Delta$ admits a bounded
$H^\infty$-cal\-culus on $L^q({\R^d})$, and
$D((-\Delta)^\fraca12)$ equals the Bessel potential space
${H}^{1,q}(\R^d)$ associated with $L^q(\R^d)$.
As a result, \eqref{eq:Kry-A} implies \eqref{eq:Kry-pq} without the
restriction $q\leq p$.
By the same token, the Dirichlet Laplacian $A=-\frac12\Delta_{ \mathrm{Dir}}$
on a~bounded regular domain
$D\subseteq\R^d$ has a bounded $H^\infty$-calculus on
$L^q(D)$, and via complex interpolation
(cf. \eqref{eq:complexfract}, \cite{KKW}, Lemma 9.7,
and \cite{Tr1}, Theorem~4.3.2.2) one has
%
\begin{eqnarray}\label{eq:domainsDirichlet}
D(A^\fraca12) &=& [L^q(D),D(A)]_{\fraca12} = [L^q(D),H^{2,q}(D)\cap
H^{1,q}_0(D)]_{\fraca12}\nonumber
\\[-8pt]
\\[-8pt] &\subseteq& H^{1,q}(D)
\nonumber
\end{eqnarray}
with $H_0^{1,q}(D) = \{f\in H^{1,q}(D)\dvtx   f = 0 \mbox{ on } \partial
D\}$.
Noting that $\Delta_{\mathrm{Dir}}$ is invertible,~\eqref{eq:Apq} gives
$u\in L^p(\R_+\times\O;D(A^{\fraca12}))$. Hence by \eqref
{eq:domainsDirichlet} we obtain the estimate
%
\begin{equation}\label{eq:Dir}
\E\| U\| _{L^p(\R_+;H^{1,q}(D))}^p \le C^p \E\|
G\| _{L^p(\R_+;L^q(D;H))}^p.
\end{equation}
A similar estimate, but only on bounded
time intervals, holds for Neumann Laplacian (see the remarks following
Theorem \ref{thm:main} and Remark \ref{rem:MR-finiteT}).

The main advantage of our approach is that it uses estimates from the
deterministic theory of partial differential equations (e.g., the
boundedness of
the $H^\infty$-calculus) directly as building blocks in the theory of
stochastic
partial differential equations. The boundedness of the $H^\infty
$-calculus is
not a very restrictive assumption; elliptic
operators typically satisfy this assumption on $L^q$-spaces in the range
$1<q<\infty$ (see Section \ref{sec:Hcalc} for a comprehensive list of
examples).
For second order elliptic operators on bounded regular domains~%
$D$, \eqref{eq:Dir} holds again under
mild regularity assumptions (see Example~\ref{ex:elliptic}).

Under the additional assumption that the operator $A$ is invertible,
Theorem \ref{thm:main} can be strengthened to a maximal space--time
$L^p$-regularity
result; by a standard interpolation argument this also gives a sharp
maximal inequality.

\begin{theorem}\label{thm:maxregintro}
In addition to the assumptions of Theorem \ref{thm:main} suppose that
$0\in
\varrho(A)$.
\begin{longlist}[(2)]
\item[(1)] \textup{Space--time regularity.}
For all $\theta\in[0,\frac12)$,
\[
\E\|U\|_{H^{\theta,p}(\R_+;D(A^{\fraca12-\theta}))}^p \leq C^p
  \E\|G\|_{L^p(\R_+;L^q(\mathcal{O};H))}^p.
\]
\item[(2)]\textup{Maximal estimate.}
\[
\E\sup_{t\in\R_+}\|U(t)\|_{D_A(\fraca12-\fraca1p,p)}^p \leq
C^p   \E\|G\|_{L^p(\R_+;L^q(\mathcal{O};H))}^p,
\]
where $D_A(\frac12-\frac1p,p) :=
(L^q(\mathcal{O}),D(A))_{\fraca12-\fraca1p,p}$ is the real
interpolation space.
\end{longlist}
In both cases the constant $C$ is independent of $G$.
\end{theorem}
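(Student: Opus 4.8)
The plan is to deduce both assertions from Theorem~\ref{thm:main}, the point of the hypothesis $0\in\varrho(A)$ being threefold: it makes the negative fractional powers $A^{-\theta}$ bounded, it yields exponential stability $\n S(r)\n_{\calL(L^q(\mathcal O))}\le Ce^{-\delta r}$ for some $\delta>0$, and it makes the resolvent factors $(i\xi+A)^{-1/2}$ that will appear uniformly bounded near $\xi=0$; each of these fails for general $A$, which is why the extra hypothesis is needed here.

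\emph{Part (1).} Let $\mathcal D$ be the realization of $d/dt$ on $L^p(\R_+;L^q(\mathcal O))$ with domain the $W^{1,p}$-functions vanishing at $0$. Since $L^q(\mathcal O)$ is a UMD space, $\mathcal D$ is sectorial, injective, and has a bounded $H^\infty$-calculus of angle $\pi/2$, and $\mathcal D^\theta$ is the corresponding fractional time derivative. For functions of the type we consider (in particular $U$, which vanishes at $0$ and behaves like $t^{1/2}$ there), $\n v\n_{H^{\theta,p}(\R_+;D(A^{1/2-\theta}))}$ is comparable to $\n A^{1/2-\theta}v\n_{L^p(\R_+;L^q(\mathcal O))}+\n A^{1/2-\theta}\mathcal D^\theta v\n_{L^p(\R_+;L^q(\mathcal O))}$. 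The first summand for $v=U$ is at most $\n A^{-\theta}\n\,\n A^{1/2}U\n_{L^p(\R_+;L^q(\mathcal O))}$, which Theorem~\ref{thm:main} controls. For the second, one would use the stochastic Fubini theorem to write $A^{1/2-\theta}\mathcal D^\theta U(t)=\int_0^t K(t-s)G(s)\,dW_H(s)$, where $K$ is the operator-valued convolution kernel of $A^{1/2-\theta}\mathcal D^\theta(\mathcal D+A)^{-1}$; in terms of the joint functional calculus of $A$ and $\mathcal D$ it has ``symbol''
\[
\lambda^{1/2-\theta}(i\xi)^{\theta}(i\xi+\lambda)^{-1}=\big[(i\xi)^{\theta}(i\xi+\lambda)^{-\theta}\big]\,\big[\lambda^{1/2-\theta}(i\xi+\lambda)^{-(1/2-\theta)}\big]\,(i\xi+\lambda)^{-1/2},
\]
whereas the kernel $r\mapsto A^{1/2}S(r)$ occurring in \eqref{eq:Kry-A} has symbol $\lambda^{1/2}(i\xi+\lambda)^{-1}=\big[\lambda^{1/2}(i\xi+\lambda)^{-1/2}\big](i\xi+\lambda)^{-1/2}$. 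Each bracketed factor is a uniformly bounded --- in fact $R$-bounded --- family of functions of $A$ by the bounded $H^\infty$-calculus, so the kernel $K$ is obtained from the same ``core'' $(i\xi+\lambda)^{-1/2}$ by composition with such families, exactly as is $A^{1/2}S(\cdot)$. Hence the operator $G\mapsto A^{1/2-\theta}\mathcal D^\theta U$ lies within the scope of the $L^p(L^q)$-square function estimate for stochastic integrals underlying Theorem~\ref{thm:main} (in the form \eqref{eq:Kry-A}), and one gets $\E\n A^{1/2-\theta}\mathcal D^\theta U\n_{L^p(\R_+;L^q(\mathcal O))}^p\le C^p\,\E\n G\n_{L^p(\R_+;L^q(\mathcal O;H))}^p$, which together with the first summand proves (1).

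\emph{Part (2).} Because $0\in\varrho(A)$ and $p>2$, the real interpolation space admits the equivalent norm
\[
\n x\n_{D_A(\frac12-\frac1p,p)}^p\cong\n x\n_{L^q(\mathcal O)}^p+\int_0^\infty\n A^{1/2}S(r)x\n_{L^q(\mathcal O)}^p\,dr .
\]
With $x=U(t)$, the supremum over $t$ of the first term is controlled by Part~(1) for any fixed $\theta\in(1/p,1/2)$ together with the Sobolev embedding $H^{\theta,p}(\R_+;D(A^{1/2-\theta}))\embed BUC(\R_+;L^q(\mathcal O))$. For the second term one notes that, for each fixed $r$, the process $t\mapsto A^{1/2}S(r)U(t)=A^{1/2}\int_0^t S(t-s)[S(r)G(s)]\,dW_H(s)$ is $A^{1/2}$ applied to the stochastic convolution driven by $S(r)G$; applying Theorem~\ref{thm:main} to $S(r)G$, using $\n S(r)\n\le Ce^{-\delta r}$ together with a maximal/temporal-regularity bound from Part~(1), and integrating in $r$ against this exponentially decaying weight yields the bound $C^p\,\E\n G\n_{L^p(\R_+;L^q(\mathcal O;H))}^p$. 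Alternatively, and as the text indicates, (2) follows from the family of estimates in (1) by a standard real-interpolation (trace) argument, $D_A(\frac12-\frac1p,p)$ being the correct trace space of the scale $\{H^{\theta,p}(\R_+;D(A^{1/2-\theta}))\}_{\theta\in[0,1/2)}$.

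\emph{Main obstacle.} The heart of the matter is Part~(1): interchanging the fractional time derivative with the stochastic integral and controlling the resulting kernel. In the deterministic setting this is immediate from the mixed-derivative (Dore--Venni/Kalton--Weis) theorem, since $A^{1/2-\theta}\mathcal D^\theta(\mathcal D+A)^{-1}$ is bounded on $L^p(\R_+;L^q(\mathcal O))$ because the exponents obey $(1/2-\theta)+\theta=1/2\le1$. Here, however, the ``driving function'' $G\,dW_H$ is only a distribution, so deterministic maximal regularity cannot be invoked directly; one has to route the estimate through the square-function reformulation of Theorem~\ref{thm:main} and check carefully that the kernel $K$ --- equivalently its symbol $\lambda^{1/2-\theta}(i\xi)^{\theta}(i\xi+\lambda)^{-1}$ --- genuinely fits that framework. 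Making this rigorous, i.e.\ handling the interplay of the two functional calculi together with the $R$-boundedness of the auxiliary families and the use of $0\in\varrho(A)$ at the right places, is the real work; Part~(2) is then comparatively routine.
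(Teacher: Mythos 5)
Your overall architecture (express the fractional time derivative of $U$ as a stochastic convolution against a more singular kernel, then use a trace embedding for the maximal estimate) points in the right direction, but the central step of Part (1) is not actually proved, and the proposed reduction is invalid as stated. You argue that because the symbol $\lambda^{\frac12-\theta}(i\xi)^{\theta}(i\xi+\lambda)^{-1}$ of your kernel $K$ and the symbol $\lambda^{\frac12}(i\xi+\lambda)^{-1}$ of $A^{\frac12}S(\cdot)$ both factor through the common ``core'' $(i\xi+\lambda)^{-\frac12}$ modulo bounded factors, the estimate for $K$ ``lies within the scope'' of Theorem \ref{thm:main}. Knowing that the core composed with one bounded factor defines a bounded operator does not imply the same for the core composed with a different bounded factor: to transfer the bound you would have to invert the factor $\lambda^{\frac12}(i\xi+\lambda)^{-\frac12}$, whose inverse $(1+i\xi/\lambda)^{\frac12}$ is unbounded; and the core itself corresponds to the kernel $c\,t^{-\frac12}S(t)$, i.e.\ exactly the excluded endpoint $\theta=\frac12$ of the scale you need, so it is not a controlled object either. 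What is genuinely required is a new stochastic maximal-regularity estimate for the singular kernels $t^{-\theta}A^{\frac12-\theta}S(t)$ with $\theta\in[0,\frac12)$; this is precisely the paper's Theorem \ref{thm:maxregLp}, whose proof redoes the whole square-function argument (Poisson representation along the rays $ue^{\pm i\alpha}$, the $R$-bounded family $\mathcal{I}$ of convolutions with the $\theta$-dependent scalar kernels $k_{\alpha,\theta}$, and the $H^\infty$-square-function estimates for $A$ and $A^*$ with $\varphi_j(u)=u^{\frac14-\frac12\theta}e^{-\frac12 ue^{ij\alpha}}$). You explicitly flag this kernel verification as ``the real work'', but that work is the proof; it cannot be replaced by the factorization heuristic. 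Granted that estimate, the paper obtains Part (1) by the Da Prato--Kwapie\'n--Zabczyk factorization: with $\mathscr{C}=\mathscr{A}+\mathscr{B}$, the stochastic Fubini theorem and \eqref{eq:Lambdainverse} give $\mathscr{C}^{-\theta}(A^{\frac12-\theta}S_\theta\diamond G)=A^{\frac12-\theta}S\diamond G$, and then \eqref{eq:idLambdalambda} (Pr\"uss--Sohr/Dore--Venni, which is where your mixed-derivative theorem really enters) converts the bound on $\mathscr{C}^{\theta}(A^{\frac12-\theta}S\diamond G)$ into the $H^{\theta,p}(\R_+;D(A^{\frac12-\theta}))$ bound --- the deterministic sum-of-operators theorem is applied to the already-estimated process, never to the noise.

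Part (2) as you propose it also has a gap in its primary route: applying Theorem \ref{thm:main} to the shifted data $S(r)G$ for fixed $r$ controls $\E\int_0^\infty\n A^{\frac12}S(r)U(t)\n^p\,dt$, whereas the equivalent norm you introduced requires $\E\sup_{t}\int_0^\infty\n A^{\frac12}S(r)U(t)\n^p\,dr$; interchanging the supremum in $t$ with the integral in $r$ is the entire difficulty, and invoking ``a maximal/temporal-regularity bound from Part (1)'' at that point is circular. Your alternative suggestion (trace space of the scale in (1)) is indeed the paper's route, but it must be instantiated: for $\theta\in(\frac1p,\frac12)$ one applies the embedding $H^{\theta,p}(\R_+;L^q(\mathcal{O}))\cap L^p(\R_+;D(A^{\theta}))\hookrightarrow BUC(\R_+;D_A(\theta-\frac1p,p))$ (Zacher/Am\-ann/Pr\"uss) to the process $A^{\frac12-\theta}S\diamond G$, using Part (1) together with Theorem \ref{thm:main} and $0\in\varrho(A)$ for the two component norms, and then lifts via $\n x\n_{D_A(\frac12-\frac1p,p)}\eqsim\n A^{\frac12-\theta}x\n_{D_A(\theta-\frac1p,p)}$ (Triebel 1.15.2(e)). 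As written, your proof of (2) is only a gesture toward this, and your proof of (1), on which it rests, is missing its key estimate.
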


As far as we know, Theorem \ref{thm:maxregintro} is new even for the
Laplace operator on~$L^2(\R^d)$.

The case $\theta=0$ of part (1) easily generalizes to the more general estimate
%
\begin{equation}\label{eq:delta}
\E\|U\|_{L^p(\R_+;D(A^{\fraca12+\delta}))}^p \le C^p \E\|
G\|_{L^p(\R_+;D((A\otimes I_H)^{\delta}))}^p
\end{equation}
for any $\delta>0$.
A similar estimate for $\delta<0$ can be derived by using
extrapolation spaces, which is useful when dealing with space--time
white noise.

A further advantage of our methods is that, with the aid of some
additional tools from
functional analysis, Theorems \ref{thm:main} and \ref{thm:maxregintro}
and their proofs extend to more general function spaces, such as spaces
which are
isomorphic to closed subspaces of $L^q(\mathcal{O})$ (e.g., Sobolev
and Besov spaces).

\subsection{Notation}\label{sec:notation}
Unless otherwise stated, all vector spaces are real.
Arguments involving spectral theory are
carried out by passing to complexifications. Throughout the paper, $H$
is a
Hilbert space, and $(\mathcal{O},\Sigma,\mu)$ is a~$\sigma$-finite
measure space.
We use the notation $(r_n)_{n\ge1}$ for a \textit{Rademacher\vadjust{\goodbreak}
sequence}, which is
a sequence of
independent random variables which take the values $\pm1$ with equal
probability.
We write $a \lesssim_k b$ to express that there exists a constant $c$, only
depending on $k$, such that $a\leq c b.$ We write $a\eqsim_k b$ to
express that
$a \lesssim_k b$ and $b \lesssim_k a$.

\section{Preliminaries}\label{sec:prelim}

\subsection{Stochastic integration}\label{sec:stochint}

Let $(\O,\calA,\P)$ be a probability space endowed with filtration
$\F= (\F_t)_{t\ge0}$.
An \textit{$\F$-cylindrical Brownian motion on $H$} is a~bounded linear
operator $\mathcal{W}_H\dvtx  L^2(\R_+;H)\to L^2(\O)$ such that:
\begin{longlist}[(iii)]
\item[(i)] for all $t\ge0$ and $h\in H$ the random variable
$W_H(t)h:= \mathcal{W}_H (\one_{(0,t]}\otimes h)$ is centred Gaussian
and $\F_t$-measurable;
\item[(ii)] for all $t_1,t_2\ge0$ and $h_1,h_2\in H$ we have
$ \E(W_H(t_1)h_1\cdot W_H(t_2)h_2) = t_1\wedge t_2 [h_1,h_2]$;
\item[(iii)]
for all $t_2\ge t_1\ge 0$ and $h\in H$ the random variable
$W_H(t_2)h - W_H(t_1)h$ is independent of $\mathscr{F}_{t_1}$.
\end{longlist}
It is easy to see that for all $h\in H$ the process $(t,\omega)\mapsto
(W_H(t) h)(\omega)$ is an $\F$-Brownian
motion (which is standard if $\| h\| =1$).

For $0\le a<b<\infty$, $\F_a$-measurable sets $F\subseteq\O$,
$h\in H$, and
$f\in L^q(\mathcal{O})$
the stochastic integral of the indicator process
$(t,\omega)\mapsto1_{(a,b]\times F}(t,
\omega) f\otimes h$ with respect to $W_H$ is defined as the
$L^q(\mathcal{O})$-valued random variable
\[
\int_0^t 1_{(a,b]\times F}  (f\otimes h)\,dW_H:= \bigl(W_H(t\wedge b) h -
W_H(t\wedge a) h\bigr)1_F f, \qquad t\ge0.
\]
By linearity, this definition extends to \textit{adapted finite rank step
processes} $G\dvtx \R_+\times\Omega\to L^p(\mathcal{O};H)$, which we
define as
finite linear combinations of
adapted indicator processes of the above form. Recall that a process
$G\dvtx \R_+\times\O\to
L^q(\mathcal{O};H)$ is called \textit{$\F$-adapted} if for every
$t\in\R_+$,
$\omega\mapsto G(t, \omega)$ is $\F_t$-measurable.

The next result is a special case of \cite{NVWco}, Theorem 6.2.

\begin{proposition}\label{prop:NVW}
Let $p\in(1, \infty)$ and $q\in(1,\infty)$ be fixed.
For all $\F$-adapted finite rank step processes $G\dvtx \R_+\times\O\to
L^q(\mathcal{O};
H)$ we have the ``It\^{o} isomorphism''
\[
c^p\E\| G\|_{L^q(\mathcal{O};L^2(\R_+;H))}^p \le\E \biggl\| \int
_0^\infty
G\,dW_H \biggr\|^p_{L^q(\mathcal{O})} \le C^p \E\|G\|_{L^q(\mathcal
{O};L^2(\R_+;H))}^p
\]
with constants $0<c\le C<\infty$ independent of $G$.
\end{proposition}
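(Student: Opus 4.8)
The plan is to reduce the two-sided estimate to a pointwise scalar computation on $\mathcal{O}$, together with one decoupling step that removes the dependence on the moment exponent $p$; the latter is where the UMD property of $L^q(\mathcal{O})$ (for $1<q<\infty$) enters.

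\emph{Step 1: the diagonal case $p=q$.} Given an $\F$-adapted finite rank step process $G=\sum_j \one_{(a_j,b_j]\times F_j}\,f_j\otimes h_j$, one has explicitly
\[
\int_0^\infty G\,dW_H=\sum_j \one_{F_j}\big(W_H(b_j)h_j-W_H(a_j)h_j\big)f_j ,
\]
and for $\mu$-a.e.\ $x\in\mathcal{O}$ the process $s\mapsto G(s,\cdot)(x)\in H$ is again an adapted step process, with $t\mapsto\int_0^t G(s,\cdot)(x)\,dW_H(s)$ a real continuous martingale of quadratic variation $\int_0^t\|G(s,\cdot)(x)\|_H^2\,ds$. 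Applying the classical Burkholder--Davis--Gundy inequalities pointwise in $x$, then integrating over $\mathcal{O}$ and using Tonelli's theorem, I obtain
\[
\E\Big\|\int_0^\infty G\,dW_H\Big\|_{L^q(\mathcal{O})}^q
=\int_{\mathcal{O}}\E\Big|\int_0^\infty G(s,\cdot)(x)\,dW_H(s)\Big|^q d\mu(x)
\eqsim_q \E\|G\|_{L^q(\mathcal{O};L^2(\R_+;H))}^q ,
\]
which is the claim for $p=q$.

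\emph{Step 2: passing to general $p$.} Let $\widetilde W_H$ be a cylindrical Brownian motion on $H$ that is independent of the filtration $\F$. Since $L^q(\mathcal{O})$ is a UMD space, the (two-sided) decoupling inequality for stochastic integrals of adapted step processes in UMD spaces---part of the machinery underlying \cite[Theorem 6.2]{NVWco}---gives, for every $p\in(1,\infty)$,
\[
\E\Big\|\int_0^\infty G\,dW_H\Big\|_{L^q(\mathcal{O})}^p
\eqsim_{p,q}\E\Big\|\int_0^\infty G\,d\widetilde W_H\Big\|_{L^q(\mathcal{O})}^p
=\E_G\,\E_{\widetilde W}\Big\|\int_0^\infty G\,d\widetilde W_H\Big\|_{L^q(\mathcal{O})}^p .
\]
Conditionally on $\F_\infty$ the variable $\int_0^\infty G\,d\widetilde W_H$ is a centred Gaussian element of $L^q(\mathcal{O})$, so the Kahane--Khintchine inequalities make its $p$-th and $q$-th moments comparable with constants depending only on $p,q$; combining this with the elementary Gaussian moment formula (applied conditionally, where $G$ is frozen) yields
\[
\E_{\widetilde W}\Big\|\int_0^\infty G\,d\widetilde W_H\Big\|_{L^q(\mathcal{O})}^p
\eqsim_{p,q}\Big(\E_{\widetilde W}\Big\|\int_0^\infty G\,d\widetilde W_H\Big\|_{L^q(\mathcal{O})}^q\Big)^{p/q}
\eqsim_q \|G\|_{L^q(\mathcal{O};L^2(\R_+;H))}^p .
\]
Taking $\E_G$ finishes the proof. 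Alternatively, one may quote \cite[Theorem 6.2]{NVWco} directly: for a UMD space $X$ it states $\E\|\int_0^\infty G\,dW_H\|_X^p\eqsim_{p,X}\E\|G\|_{\gamma(L^2(\R_+;H),X)}^p$, and for $X=L^q(\mathcal{O})$ the standard square-function identification $\gamma(L^2(\R_+;H),L^q(\mathcal{O}))\cong L^q(\mathcal{O};L^2(\R_+;H))$ converts this into the asserted two-sided estimate.

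\emph{Main obstacle.} The only genuinely nontrivial ingredient is the decoupling step: transferring from $W_H$ to an independent copy $\widetilde W_H$ at the level of $L^q(\mathcal{O})$-valued integrals rests on the UMD property of $L^q(\mathcal{O})$ and is precisely what \cite{NVWco} supplies. Step 1 and the Gaussian moment comparison are routine, and the identification of $\gamma$-radonifying operators into $L^q$ with an $L^q(L^2)$-space is classical but should be cited with care.
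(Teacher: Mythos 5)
Your proof is correct. It is worth noting, though, that the paper offers no argument of its own for this proposition: it simply declares it a special case of \cite[Theorem 6.2]{NVWco}, with the square-function identification of $\gamma(L^2(\R_+;H),L^q(\mathcal{O}))$ with $L^q(\mathcal{O};L^2(\R_+;H))$ left implicit (cf.\ Remark \ref{rem:UMD}); that is exactly your closing ``alternative''. Your main line is instead a self-contained derivation: pointwise Burkholder--Davis--Gundy plus Tonelli for $p=q$ (an observation the paper also records immediately after the proposition), then Garling-type decoupling in the UMD space $L^q(\mathcal{O})$, the conditional Gaussian computation, and Kahane--Khintchine to disentangle the moment exponent $p$ from $q$. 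These are precisely the ingredients inside the cited theorem, so you are in effect reproving the needed special case rather than taking a genuinely new route; what your version buys is transparency (the constants visibly depend only on $p$, $q$ through the BDG, decoupling and Khintchine constants, and the $\gamma$-radonifying formalism is avoided because in $L^q$ the $\gamma$-norm collapses to the concrete square function), while the citation buys brevity and generality over arbitrary UMD spaces, which the paper exploits in Remark \ref{rem:UMD}. If you keep the self-contained version, two points deserve explicit mention: the two-sided decoupling for adapted elementary processes is Garling's inequality and uses the UMD property (only the upper estimate would survive under weaker hypotheses), and in the conditional Gaussian step one should say that $G$, being an $\F$-adapted finite rank step process, is $\F_\infty$-measurable and hence independent of $\widetilde W_H$ by construction, which is what legitimizes the factorization $\E_G\,\E_{\widetilde W}$ and the frozen-$G$ Gaussian moment identity.
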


By a standard density argument,
these inequalities can be used to extend the stochastic integral to the
Banach space
$L_\F^p(\O;L^q(\mathcal{O};L^2(\R_+;H)))$ of all $\F$-adapted
processes $G\dvtx \R_+\times\O\to L^q(\mathcal{O};H)$
which belong to
$L^p(\O;L^q(\mathcal{O};\allowbreak L^2(\R_+;H)))$.\vadjust{\goodbreak}
In the remainder of this paper, all stochastic integrals are understood
in the
above sense. By Doob's inequality, the inequalities remain true if
the middle term is replaced by the corresponding maximal norm.
In this form, for $p=q$ they follow
directly from the (real-valued) Burkholder--Davis--Gundy inequality.

By Minkowski's inequality, for $q\in[2,\infty)$ one has
\[
\E\| G\|_{L^q(\mathcal{O};L^2(\R_+;H))} \le\E\| G\|_{L^2(\R
_+;L^q(\mathcal{O};H))}.
\]
In combination with Proposition \ref{prop:NVW}, for $p\in(1,\infty)$
and $q\in[2,\infty)$
this gives the one-sided inequality
%
\begin{equation}\label{eq:Mtype2}
\E \biggl\| \int_0^\infty G\,dW_H \biggr\| _{L^q(\mathcal{O})}^p
\le C^p \E\| G\| _{L^2(\R_+;L^q(\mathcal{O};H))}^p.
\end{equation}

\begin{remark}\label{rem:UMD}
For $q\in[1,\infty)$ the space $L^q(\mathcal{O};H)$ is canonically
isomorphic to the space
$\g(H,L^q(\mathcal{O}))$ of $\g$-radonifying operators from $H$ to
$L^q(\mathcal{O})$ (see \cite{NVW3} and the references
given therein). Using this identification, Proposition \ref{prop:NVW}
extends to arbitrary
UMD Banach spaces $E$ (see \cite{NVW1}, Theorems 5.9 and 5.12); this
class of Banach spaces includes
the spaces $L^q(\mathcal{O})$ with $q\in(1,\infty)$. For Hilbert
spaces $E$ one has the further
identification $\gamma(H,E) =\calL_2(H,E)$, the space
of Hilbert--Schmidt operators from $H$ to $E$.

The inequality
\eqref{eq:Mtype2} holds for arbitrary Banach spaces $E$
with martingale type $2$ (see \cite{Brz1,Brz2});
this class of Banach spaces includes
the spaces $L^q(\mathcal{O})$ with $q\in[2,\infty)$.
\end{remark}

\subsection{$R$-boundedness}\label{sec:gbdd}

Let $E_1$ and $E_2$ be Banach spaces, and let $(r_n)_{n\ge1}$ be a Rademacher
sequence (see Section \ref{sec:notation}).
A family $\mathscr{T}$ of bounded linear operators from $E_1$ to $E_2$
is called \textit{$R$-bounded} if there exists a constant $C\ge0$ such
that for all finite sequences $(x_n)_{n=1}^N$ in $E_1$ and
$(T_n)_{n=1}^N$ in ${\mathscr{T}}$ we have
\[
\E \Biggl\| \sum_{n=1}^N r_n T_n x_n \Biggr\| ^2
\le C^2\E \Biggl\| \sum_{n=1}^N r_n x_n \Biggr\| ^2.
\]
The least admissible constant $C$ is called the \textit{$R$-bound} of
$\mathscr{T}$, notation $R(\mathscr{T})$. For Hilbert spaces $E_1$
and $E_2$,
$R$-boundedness is
equivalent to uniform boundedness and $R(\mathscr{T}) = \sup_{t\in
\mathscr{T}} \|T\|$. The notion of
$R$-boundedness has played an important role in recent
progress in the regularity theory of (deterministic) parabolic evolution
equations. For more information on $R$-boundedness and its applications
we refer the reader to \cite{CPSW,DHP,KuWe}.

In our applications, $E_1$ and $E_2$ will always be $L^q$-spaces or
mixed $L^p(L^q)$-spaces
(possibly with values in $H$). All such spaces are examples of Banach
function spaces which are $s$-concave for some $s<\infty$ [for this
purpose we identify $H$ with $\ell^2(I)$
over a suitable index set $I$]. For these spaces, Rademacher sums
can\vadjust{\goodbreak}
be evaluated,
up to a constant, by means of \textit{square functions} (see \cite{LiTz}, Theorem~1.d.6)
\[
\Biggl (\E \Biggl\| \sum_{n=1}^N r_n x_n \Biggr\|^2_E \Biggr)^{1/2} \eqsim_E
 \Biggl\| \Biggl(\sum_{n=1}^N |x_n|^2 \Biggr)^{1/2}  \Biggr\|_E.
\]

Below we shall need a continuous version of the right-hand side,
for which we need to introduce some notation. Let $E$
be a Banach function space over $(\mathcal{O}, \Sigma, \mu)$.
For a Hilbert space $\mathcal{H}$, let $E(\mathcal{H})$ be the space
of all
strongly $\mu$-measurable functions $G\dvtx \mathcal{O}\to\mathcal{H}$
for which
$\|G(\cdot)\|_\mathcal{H}$ belongs to $E$.
Typically we shall take $\mathcal{H}= L^2(\R_+,\nu)$
with $\nu$ a $\sigma$-finite Borel measure on $\R_+$.

For $E_1=E_2=L^q(\mathcal{O})$ the next multiplier result
is due to \cite{Weis-maxreg}; the version below is included as a
special case in a
more general operator-theoretic formulation of this result,
valid for arbitrary Banach spaces $E_1$ and $E_2$, in~\cite{KaWe}
(a proof is reproduced in \cite{vNCan}).

\begin{proposition}\label{prop:KW}
Let $E_1$ and $E_2$ be Banach function spaces with finite cotype, and
let $\nu$ be a
$\sigma$-finite Borel measure on $\R_+$.
Let $M\dvtx \R_+\to\calL(E_1,E_2)$ be a function with the following
properties:
\begin{longlist}[(2)]
\item[(1)] for all $x\in E_1$ the function $t\mapsto M(t)x$ is strongly $\nu
$-measurable in $E_2$;

\item[(2)] the range $\mathscr{M} := \{M(t)\dvtx   t\in\R_+\}$ is $R$-bounded
in $\calL(E_1,E_2)$.
\end{longlist}
Then for all $G\dvtx \R_+\to E_1$ which satisfy $G\in E_1(L^2(\R_+,\nu))$
the function $MG\dvtx  \R_+\to E_2$ satisfies
$MG\in E_2(L^2(\R_+,\nu))$ and
\[
\|MG\|_{E_2(L^2(\R_+,\nu))}\le R(\mathscr{M})\|G\|_{E_1(L^2(\R
_+,\nu))}.
\]
\end{proposition}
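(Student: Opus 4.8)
The plan is to reduce the statement, in two stages, to the square-function characterisation of $R$-boundedness that is quoted just before the proposition.

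\emph{First stage: reduction to step functions and to simple multipliers.} By $\sigma$-finiteness of $\nu$ and density of simple functions in $L^2$, the functions of the form $G=\sum_{j=1}^{N}\one_{A_j}\otimes x_j$, with the $A_j\subseteq\R_+$ disjoint of finite $\nu$-measure and $x_j\in E_1$, are dense in $E_1(L^2(\R_+,\nu))$; hence, once the a priori estimate is in hand, it suffices to prove it for such $G$. For a fixed such $G$ I would next replace $M$ by a \emph{simple multiplier} $M=\sum_{k}\one_{B_k}\otimes T_k$ with $\{B_k\}$ a finite measurable partition of $\R_+$ and $T_k\in\mathscr{M}$. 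The point is that the finitely many functions $t\mapsto M(t)x_j$ are strongly $\nu$-measurable, so the $E_2^N$-valued function $t\mapsto(M(t)x_1,\dots,M(t)x_N)$ is a.e.\ separably valued; choosing a countable set $\{t_\ell\}$ with $\{(M(t_\ell)x_j)_j\}$ dense in the essential range and setting $M_n(t):=M(t_{\ell(n,t)})$, where $\ell(n,t)\le n$ is the measurably chosen index minimising $\sum_j\|M(t)x_j-M(t_\ell)x_j\|_{E_2}$, one obtains simple $\mathscr{M}$-valued functions with $M_n(t)x_j\to M(t)x_j$ a.e.\ in $E_2$ for each $j$. Granting the theorem for simple multipliers, the uniform bound $\|M_nG\|_{E_2(L^2(\R_+,\nu))}\le R(\mathscr{M})\|G\|_{E_1(L^2(\R_+,\nu))}$ together with a Fatou-type argument in $E_2(L^2(\R_+,\nu))$ yields both $MG\in E_2(L^2(\R_+,\nu))$ and the estimate.

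\emph{Second stage: the simple case via square functions.} For a simple multiplier $M$ and a step function $G$ as above, passing to a common refinement $\{C_i\}_{i=1}^{r}$ of the partitions $\{B_k\}$ and $\{A_j\}$ gives $M(t)G(t)=S_i\xi_i$ on $C_i$ for suitable $S_i\in\mathscr{M}$ and $\xi_i\in E_1$, so that
\[
\|MG\|_{E_2(L^2(\R_+,\nu))}=\Big\|\Big(\sum_{i=1}^{r}\nu(C_i)\,|S_i\xi_i|^2\Big)^{1/2}\Big\|_{E_2},\qquad
\|G\|_{E_1(L^2(\R_+,\nu))}=\Big\|\Big(\sum_{i=1}^{r}\nu(C_i)\,|\xi_i|^2\Big)^{1/2}\Big\|_{E_1}.
\]
Absorbing the scalars $\nu(C_i)^{1/2}$ into the $\xi_i$, the required inequality becomes the ``$\ell^2$-boundedness'' estimate
\[
\Big\|\Big(\sum_i|S_i\eta_i|^2\Big)^{1/2}\Big\|_{E_2}\le R(\mathscr{M})\,\Big\|\Big(\sum_i|\eta_i|^2\Big)^{1/2}\Big\|_{E_1}
\]
for arbitrary finite families $\eta_i\in E_1$ and $S_i\in\mathscr{M}$. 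This is obtained by sandwiching both square functions between Rademacher averages: since $E_1$ and $E_2$ have finite cotype, \cite[Theorem~1.d.6]{LiTz} gives $\|(\sum_i|S_i\eta_i|^2)^{1/2}\|_{E_2}\eqsim(\E\|\sum_i r_iS_i\eta_i\|_{E_2}^2)^{1/2}$, which by $R$-boundedness is $\le R(\mathscr{M})(\E\|\sum_i r_i\eta_i\|_{E_1}^2)^{1/2}\eqsim R(\mathscr{M})\|(\sum_i|\eta_i|^2)^{1/2}\|_{E_1}$; the constants in these equivalences can be tracked so as to reproduce the constant in the statement (cf.\ \cite{KaWe,vNCan}).

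\emph{Main obstacle.} The conceptual content lies in the second stage, but it is short once the square-function language is set up. The delicate part is the first stage: the measurable reduction of an arbitrary strong-operator-measurable $M$ to simple multipliers whose values still lie in $\mathscr{M}$ itself (so that $R(\mathscr{M})$, and not merely some larger $R$-bound, governs the approximants), together with the limiting argument in the vector-valued Banach function space $E_2(L^2(\R_+,\nu))$, which must simultaneously deliver the norm estimate and the measurability and integrability of $MG$.
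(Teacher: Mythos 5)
The paper itself does not prove Proposition \ref{prop:KW}: it quotes it as a special case of the operator-theoretic multiplier theorem of Kalton and Weis \cite{KaWe}, whose proof is reproduced in \cite{vNCan} in the language of $\gamma$-radonifying norms. Your argument is, in substance, the lattice rendition of that proof: approximate $M$ by simple multipliers with values in $\mathscr{M}$ (your nearest-point selection from a countable set, applied to the finitely many vectors occurring in a step function $G$, is a legitimate way to do this), reduce the simple case to a discrete square-function estimate on a common refinement, obtain that estimate from the Khintchine--Maurey equivalence \cite[Theorem 1.d.6]{LiTz} together with $R$-boundedness, and pass to the limit by a Fatou argument. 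This structure is sound, and the measure-theoretic housekeeping you leave implicit (density of step functions in $E_1(L^2(\R_+,\nu))$, strong measurability of $MG$, the Fatou property needed for the limiting step) is available here: finite cotype forces order continuity of the lattices, and the spaces actually used in the paper are mixed $L^p(L^q)$ spaces.

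The one claim you should not make is that the constants in the two Khintchine--Maurey equivalences ``can be tracked so as to reproduce the constant in the statement''. They cannot: your route yields $\|MG\|_{E_2(L^2(\R_+,\nu))}\le C_{E_1}C_{E_2}\,R(\mathscr{M})\,\|G\|_{E_1(L^2(\R_+,\nu))}$, where $C_{E_1},C_{E_2}$ are the square-function constants of the two lattices, and these factors do not cancel when $E_1\neq E_2$ (nor in general even when $E_1=E_2$, since the $R$-bound is defined through second moments of Rademacher sums while the lattice square function corresponds to a Gaussian/Khintchine normalization). The literal constant $R(\mathscr{M})$ belongs to the $\gamma$-norm formulation of \cite{KaWe}; translating it into the square-function norms used here costs a factor depending on $E_1$ and $E_2$. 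This is immaterial for the paper, which only ever uses Proposition \ref{prop:KW} up to unspecified constants (see the factors $C_1C_2C_3$ in the proof of Theorem \ref{thm:maxregLp}), but your write-up should either state the estimate with such a constant or switch to the $\gamma$-space formulation if the sharp constant is wanted.
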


Conversely, this multiplier property characterizes $R$-bounded
families. This fact
will not be needed here.

\subsection{\texorpdfstring{Operators with a bounded $H^\infty$-calculus}
{Operators with a bounded H infinity-calculus}}\label{sec:Hcalc}
The $H^\infty$-calculus was originally developed by McIntosh and his
collaborators
\cite{ADM,AMcN,CDMY,McI}
in a line of research which eventually culminated in the solution of the
Kato square root problem \cite{KatoSqrt}. Meanwhile, this technique
has found widespread
applications in harmonic analysis and PDEs. For an in-depth treatment
of the theory
we refer to
\cite{Haase2,KuWe,Weis-survey}.

Let ${-}A$ be the generator of a bounded strongly continuous analytic
semigroup of
operators on a Banach space $E$.
As is well known (see \cite{Am}, Proposition~I.1.4.1),
the spectrum of $A$ is contained in the closure of a sector
$\Sigma_{\sigma_0} := \{z\in\C\setminus\{0\}\dvtx
|\arg(z)|<\sigma_0\}$ for some $\sigma_0\in(0,\frac12\pi)$,
and for all $\sigma\in(\sigma_0,\pi)$ one has
\[
\sup_{z\in\C\setminus\Sigma_{\sigma}} \| z(z-A)^{-1}\|<\infty.
\]
Let $H^\infty(\Sigma_\sigma)$ denote the
Banach space of all bounded analytic functions
$\varphi\dvtx \Sigma_\sigma\to\C$ endowed with the supremum\vadjust{\goodbreak} norm,
and let $H^\infty_0(\Sigma_\sigma)$
be its linear subspace consisting of all functions
satisfying an estimate
\[
|\varphi(z)|\leq\frac{C|z|^\varepsilon}{(1+|z|^2)^\varepsilon}\vspace*{-1pt}
\]
for some $\varepsilon>0$.
For $\varphi\in H^\infty_0(\Sigma_\sigma)$ and $\sigma'\in(\sigma
_0,\sigma)$
the Bochner integral
\[
\varphi(A) = \frac{1}{2\pi i}\int_{\partial\Sigma_{\sigma'}}
\varphi(z) (z-A)^{-1}
\, dz\vspace*{-1pt}
\]
converges absolutely and is independent of $\sigma'$.
We say that $A$ has a \textit{bounded $H^\infty(\Sigma_{\sigma})$-calculus}
if there is a constant $C\ge0$ such that
%
\begin{equation}\label{eq:Hinfty}
\|\varphi(A)\|\leq C \|\varphi\|_\infty, \qquad\varphi\in
H^\infty_0(\Sigma_\sigma).\vspace*{-1pt}
\end{equation}
The infimum of all $\sigma$ such that $A$ admits a bounded
$H^\infty(\Sigma_{\sigma})$-calculus is called the \textit{angle}
of the
calculus.

In order to avoid unnecessary technicalities, from now on we shall
always assume that $A$ is
injective and has dense range. This hardly entails any loss of
generality; as
for generators ${-}A$ of bounded analytic semigroups on reflexive Banach
spaces $E$
one has a direct sum decomposition
$ E = {\mathsf N}(A) \oplus\overline{{\mathsf R}(A)}$ into kernel and
closure of the range of $A$
(see, e.g., \cite{KuWe}). In particular, such an operator
is the direct sum of a zero operator and an injective sectorial~%
ope\-rator with dense range
(see Remark \ref{rem:MR-finiteT} for further discussion on this issue).

If $A$ has a bounded $H^\infty(\Sigma_{\sigma})$-calculus, the
mapping $\varphi\mapsto\varphi(A)$ has a
unique extension to a bounded homomorphism from $H^\infty(\Sigma
_{\sigma})$ to $\calL(E)$ which
satisfies \eqref{eq:Hinfty} with the same constant $C$.

Even on Hilbert spaces $E$ there exist generators
${-}A$ of bounded strongly continuous
analytic semigroups for which $A$ does not admit a bounded
$H^\infty$-calculus (see \cite{KuWe}, Example 10.17). Examples of
operators which do admit such a calculus are
collected below.

We shall need a generalization, taken from \cite{KaWe} (see also \cite
{LeM04}),
of McIntosh's square function
characterization for the boundedness of $H^\infty$-calculi in Hilbert spaces
\cite{McI} (see also \cite{CDMY}).
We use the notation of Proposition \ref{prop:KW}
with $d\nu= \frac{dt}{t}$.\vspace*{-3pt}

\begin{proposition}\label{prop:Hinftysquare}
Let $E = L^q(\mathcal{O})$ with $q\in(1, \infty)$. Assume that $A$
has a~bounded
$H^\infty(\Sigma_{\sigma})$-calculus on $E$ for some $\sigma\in
(0,\pi/2)$.
For each $\varphi\in H^\infty_0(\Sigma_{\sigma})$
there exists a constant $C\ge0$ such that
\begin{eqnarray*}
\|t\mapsto\varphi(tA) x \|_{E(L^2(\R_+,\fraca{dt}{t}))} &\leq& C
\|x\|_{E},   \qquad    x\in E,
\\[-2pt] \|t\mapsto\varphi(tA^*) x^* \|_{E^*(L^2(\R_+,\fraca{dt}{t}))}
&\leq& C
\|x^*\|_{E^*},  \qquad    x^*\in E^*.\vspace*{-2pt}
\end{eqnarray*}
\end{proposition}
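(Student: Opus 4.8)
The plan is to deduce both inequalities from a single observation, which I will call the \emph{uniform $H^\infty$-bound for randomized dilations}: for $\varphi\in H^\infty_0(\Sigma_\sigma)$ and \emph{any} choice of signs $(\epsilon_n)_{n\in\Z}\subseteq\{-1,1\}$, the function $z\mapsto\sum_{n\in\Z}\epsilon_n\varphi(2^nz)$ belongs to $H^\infty(\Sigma_\sigma)$ with supremum norm bounded by a constant depending only on $\varphi$ and $\sigma$. This is immediate from the defining bound $|\varphi(z)|\le C|z|^\e(1+|z|^2)^{-\e}$, which gives $|\varphi(2^nz)|\lesssim\min(|2^nz|^\e,|2^nz|^{-\e})$; since for fixed $z\in\Sigma_\sigma$ the moduli $|2^nz|$, $n\in\Z$, form a geometric progression through $(0,\infty)$, the series $\sum_n|\varphi(2^nz)|$ converges and is bounded uniformly in $z$ (and in the signs).

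For the first inequality in the case $q\ge 2$ I would proceed as follows. Set $\varphi_s(z):=\varphi(sz)$ for $s\in[1,2]$; these lie in $H^\infty_0(\Sigma_\sigma)$ uniformly in $s$. Substituting $t=s2^n$ gives, pointwise on $\mathcal{O}$,
\[
\int_0^\infty|\varphi(tA)x|^2\,\frac{dt}{t}=\int_1^2\Big(\sum_{n\in\Z}|\varphi_s(2^nA)x|^2\Big)\,\frac{ds}{s}.
\]
Taking the $L^q(\mathcal{O})$-norm of the square root and applying Minkowski's inequality in $L^{q/2}(\mathcal{O})$ (the sole place $q\ge 2$ is used), it suffices to bound, uniformly in $s\in[1,2]$ and in finite sets $F\subseteq\Z$, the quantity $\|(\sum_{n\in F}|\varphi_s(2^nA)x|^2)^{1/2}\|_{L^q(\mathcal{O})}$. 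By the square-function $\eqsim$ Rademacher-sum equivalence recalled before Proposition \ref{prop:KW}, this is $\eqsim_q(\E\|\sum_{n\in F}r_n\varphi_s(2^nA)x\|_{L^q}^2)^{1/2}$; for a fixed realization of the signs one has $\sum_{n\in F}r_n\varphi_s(2^nA)=\psi(A)$ with $\psi=\sum_{n\in F}r_n\varphi(s2^n\cdot)\in H^\infty_0(\Sigma_\sigma)$ of norm controlled by the observation above, so boundedness of the $H^\infty(\Sigma_\sigma)$-calculus yields $\|\psi(A)x\|_{L^q}\le C'\|x\|_{L^q}$ with $C'$ independent of $F,s$ and the signs. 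Letting $F\uparrow\Z$ by monotone convergence proves this case.

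For the dual inequality I would use that $L^q(\mathcal{O})$ is reflexive and $A$ is injective with dense range, whence $A^*$ is an injective sectorial operator with dense range on $L^{q'}(\mathcal{O})$ carrying a bounded $H^\infty(\Sigma_\sigma)$-calculus of the same angle (standard duality for sectorial operators); the second estimate is then the first applied to $A^*$. It remains to handle the ``$1<q<2$'' side of each of the two estimates, where the Minkowski step breaks down. Here I would work with the continuous parameter directly, identifying $E(L^2(\R_+,\tfrac{dt}{t}))$ with the space $\g(L^2(\R_+,\tfrac{dt}{t}),E)$ of $\g$-radonifying operators (legitimate since $E=L^q$ has finite cotype, cf.\ Remark \ref{rem:UMD}); the ideal property of the $\g$-norm reduces the estimate to showing that the Gaussian symbol $z\mapsto\int_0^\infty\varphi(tz)\,dW(t)$, with $W$ a white noise on $(\R_+,\tfrac{dt}{t})$, is almost surely in $H^\infty(\Sigma_\sigma)$ with bounded second moment of its supremum norm — which follows from the $H^\infty_0$-decay of $\varphi$ via a Gaussian maximal inequality playing the role of the elementary $\ell^1$-bound above, after which one again feeds the randomized symbol into the $H^\infty$-calculus.

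The main obstacle is precisely this continuous-parameter bookkeeping for $1<q<2$: the passage through $\g$-radonifying operators together with the Gaussian maximal estimate for the analytic process $z\mapsto\int_0^\infty\varphi(tz)\,dW(t)$ on the sector. Everything else is soft: the boundedness of the $H^\infty$-calculus enters purely as a black box applied to randomized symbols, and the identities $\sum_n r_n\varphi_s(2^nA)=\big(\sum_n r_n\varphi_s(2^n\cdot)\big)(A)$ are routine for finite sums and pass to the limit through the extended calculus.
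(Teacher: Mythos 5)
The paper itself offers no proof of this proposition: it is quoted from \cite{KaWe} (see also \cite{LeM04}). Your argument for the first inequality in the case $q\ge 2$ is essentially the standard discretization behind those references --- the identity $\int_0^\infty|\varphi(tA)x|^2\,\frac{dt}{t}=\int_1^2\sum_{n\in\Z}|\varphi(s2^nA)x|^2\,\frac{ds}{s}$, Minkowski's inequality in $L^{q/2}$, the Khintchine--Maurey equivalence, and the uniform $H^\infty$-bound for the randomized dilated symbols --- and that part, together with the duality step replacing $A$ by $A^*$ on $E^*=L^{q'}$, is correct.

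The genuine gap is the case $1<q<2$, and it is not a removable corner: for every $q\ne 2$ exactly one of the two asserted estimates lives in an $L^r$-space with $r<2$ (the second one, on $L^{q'}$, when $q>2$; the first one when $q<2$), and it is precisely the dual estimate on $L^{q'}$ that the paper needs in the proof of Theorem \ref{thm:maxregLp}. Your proposed argument for this case fails: the random symbol $\Phi_W(z)=\int_0^\infty\varphi(tz)\,dW(t)$ is almost surely \emph{unbounded} on $\Sigma_\sigma$, so the ``Gaussian maximal inequality'' you invoke does not exist. Indeed, fix a ray $\{re^{i\theta}:r>0\}$ in the sector on which $\varphi$ does not vanish identically; by the dilation invariance of $\frac{dt}{t}$, the process $u\mapsto\Phi_W(e^{u}e^{i\theta})$ is a centred stationary Gaussian process with strictly positive variance $\int_0^\infty|\varphi(te^{i\theta})|^2\,\frac{dt}{t}$ and covariance tending to $0$ as the lag tends to infinity (by the $H^\infty_0$-decay of $\varphi$); its values at widely separated points are thus nearly independent Gaussians of fixed variance, whence $\sup_{u\in\R}|\Phi_W(e^{u}e^{i\theta})|=\infty$ almost surely and $\E\|\Phi_W\|_{H^\infty(\Sigma_\sigma)}^2=\infty$. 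Feeding $\Phi_W$ into the calculus as a bounded symbol is therefore impossible, and the reduction via the ideal property of the $\gamma$-norm collapses at exactly this point. Note that the obstruction is not the square-function/Rademacher equivalence (which holds in $L^q$ for all $1<q<\infty$, since these spaces have finite cotype) but only your Minkowski passage from the continuous to the discrete square function; the known proofs replace it by arguments inside the $\gamma$-space framework, e.g.\ trace duality between $\gamma(L^2(\R_+,\frac{dt}{t}),E)$ and $\gamma(L^2(\R_+,\frac{dt}{t}),E^*)$ combined with the (already available) discrete estimates for $A^*$ --- this is the content of \cite{KaWe, LeM04} --- and some such substitute is needed before your proof can stand.
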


Here, as before,
\[
\|t\mapsto\varphi(tA) x \|_{E(L^2(\R_+,\fraca{dt}{t}))}
=  \biggl\|  \biggl(\int_{\R_+} |\varphi(tA) x|^2 \,\frac{dt}{t}
\biggr)^{1/2}  \biggr\|_{E}\vspace*{-1pt}
\]
and similarly for the expression involving $A^*$. Proposition \ref
{prop:Hinftysquare}
actually can be extended to arbitrary angles $\sigma\in(0,\pi)$,
but we will not need this fact.\vadjust{\goodbreak}

In the converse direction, if ${-}A$ is a generator of a bounded strongly
continuous analytic semigroup on $E=L^q(\mathcal{O})$
and the above inequalities
hold for some nonzero $\varphi\in H^\infty_0(\Sigma_{\sigma})$,
then $A$ has a bounded
$H^\infty(\Sigma_{\sigma'})$-calculus for all $\sigma'>\sigma$
\cite{KaWe}.

We will also need the fact (combine \cite{Tr1}, Theorem 1.15.3, and
\cite{Haase2}, Proposition~3.1.9) that if $A$ has a bounded
$H^\infty(\Sigma_{\sigma})$-calculus for some $\sigma\in(0,\frac
12\pi)$, then
$A$ has bounded imaginary powers and
$\sup_{s\in[-1,1]}\|A^{is}\|<\infty$. In particular this implies,
for all $\theta\in(0,1)$,
%
\begin{equation}\label{eq:complexfract}
[E,D(A)]_{\theta} = D(A^{\theta})   \qquad \mbox{with equivalent norms},\vspace*{-1pt}
\end{equation}
where $[E,D(A)]_{\theta}$ is the complex interpolation space of exponent
$\theta$.\vspace*{-3pt}

\subsubsection{\texorpdfstring{Examples of operators with a bounded $H^\infty$-calculus}
{Examples of operators with a bounded H infinity-calculus}}
\label{subsec:examplH}
Many common differential operators are known to admit a bounded
$H^\infty$-calculus
(see, e.g., the lecture notes \cite{DHP,KuWe} and the survey article
\cite{Weis-survey}). In this paragraph we collect some examples
illustrating this point.
We always take $q\in(1,\infty)$.\vspace*{-1pt}

\begin{example}\label{ex:Laplace}
The most basic example is the Laplace operator $A = -\frac12\Delta$
on $L^q(\R^d)$,
which has a bounded $H^\infty$-calculus of zero angle; this follows
from an
application of the Mihlin multiplier theorem (see \cite{KuWe}, Example~10.2b).
For this operator one has $D(A^{\fraca12}) = H^{1,q}(\R^d)$.

Also the Laplace operator with Dirichlet boundary conditions on $L^q(D)$,
where $D\subseteq\R^d$ is a bounded domain with $C^2$-boundary, has a bounded
$H^\infty$-calculus of zero angle (see \cite{DDHPV}).
In this case
one has $D(A^{\fraca12}) \subseteq H^{1,q}(D)$
[with equality if we replace $H^{1,q}(D)$ by
$H^{1,q}_0(D)= \{f\in H^{1,q}(D)\dvtx   f=0 \mbox{ on }\partial D\}$
(see
\cite{Amnonhom}, Remark~7.3,
combined with \cite{DDHPV}, Theorem~2.3, for $C^2$-domains)].

Similar results hold under different boundary conditions.\vspace*{-1pt}
\end{example}

\begin{example}\label{ex:elliptic}
Let $D\subseteq\R^d$ be a bounded domain with $C^2$-boundary.
Consider the closed and densely defined operator $A$ in $L^q(D)$ defined
by
\[
-A f(x) = \sum_{i,j=1}^d a_{ij}(x)\,\partial_{x_i x_j}f(x)+ \sum_{j=1}^d
b_j(x)\,\partial_{x_j} f(x) + c(x) f(x)\vspace*{-1pt}
\]
on the domain $D(A) = H^{2,q}(D)\cap H^{1,q}_0(D)$.
We assume that the coefficient $a_{ij}=a_{ji}$ and $b_j, c_j$ are
bounded and measurable and that
${-}A$ is uniformly elliptic, that is, there is a constant $\nu>0$ such that
\[
\sum_{i,j=1}^d a_{ij}(x) \xi_i \xi_j \geq\nu|\xi|^2, \qquad     x\in
D,\
 \xi\in\R^d.\vspace*{-1pt}
\]
It is shown in \cite{AHS,DDHPV} that if the coefficients $a_{ij}$ are
H\"{o}lder
conditions on~$\overline{D}$, then $w+A$ admits a bounded $H^\infty$-calculus
of angle less than $\pi/2$ for all $w\in\R$ large enough, and one has
$D((w+A)^{\fraca12}) \subseteq H^{1,q}(D)$.
An analogous result holds for $D=\R^d$; in this case one can weaken
the H\"{o}lder
continuity assumption on $a_{ij}$ to a VMO assumption (see \cite{DuYa}).\vadjust{\goodbreak}

Similar results hold for higher-order parameter-elliptic systems
on smooth domains satisfying the Lopatinksii--Shapiro conditions
(see \cite{DDHPV}, Theorem~2.3, and \cite{KKW}).\vspace*{-2pt}
\end{example}

\begin{example}\label{ex:Stokes}
Let $D\subseteq\R^d$ be a bounded domain with $C^2$-boundary. An
important operator
arising in the context of the Navier--Stokes equations is the Stokes operator
$A = -P \Delta$, where $P$ is the Helmholtz projection of
$[L^q(D)]^d$ onto the Helmholtz space $L^q_{\sigma}(D)$,
with domain $D(A) = [H^{2,q}(D)]^d\cap[H^{1,q}_0(D)]^d \cap
L^q_{\sigma}(D)$. The
operator $A$
has a bounded $H^\infty$-calculus of angle less than $\pi/2$ on $
L^q_{\sigma}(D)$ (see
\cite{KKW}, Theorem 9.17, and the references therein).
For $w\in\R$ large enough $D((w+A)^{\fraca12}) = [H^{1,q}_0(D)]^d\cap
L^q_{\sigma}(D)$.\vspace*{-2pt}
\end{example}

\begin{example}\label{ex:contraction}
Let ${-}A$ be an injective operator with dense range which generates a
positive contraction semigroup $S=(S(t))_{t\geq0}$ on
$L^q(\mathcal{O})$.
If $S$ extends to a bounded analytic semigroup on $L^q(\mathcal{O})$,
then $A$ has a bounded
$H^\infty$-calculus of angle less than $\pi/2$ (see \cite{KWcalc}, Corollary~5.2).\vspace*{-2pt}
\end{example}

Some of the above results have extensions to domains $D$ with
$C^{1,1}$-boundary.
Further important examples of operators with a bounded $H^\infty
$-calculus can
be obtained by considering kernels bounds (see \cite{BlKu} and the references
therein).
Finally, we note that also operators of Ornstein--Uhlenbeck type and
operators of Schr\"{o}dinger type $A=-\Delta+V$ have a
bounded $H^\infty$-calculus (see \cite{KKW}).\vspace*{-3pt}

\section{$R$-boundedness of stochastic convolutions}\label{sec:Lq}
In this section we will prove the $R$-boundedness of a certain family of
stochastic convolution operators. This result plays a key role in the proofs
of Theorems \ref{thm:main} and \ref{thm:maxregintro}.

Fix
$p\in[2,\infty)$ and $q\in[2,\infty)$ for the moment, and let
$\mathcal{K}$
be the set of all absolutely continuous functions $k\dvtx \R_+\to\R$ such
that $\lim_{t\to\infty} k(t) = 0$ and
\[
\int_0^\infty\sqrt{t} | k'(t)| \, dt \leq1.\vspace*{-1pt}
\]

For $k\in\mathcal{K}$ and $\F$-adapted finite rank step processes
$G\dvtx \R_+\times\O\to L^q(\mathcal{O};H)$ we define the process $I(k)
G\dvtx \R_+\times\O\to L^q(\mathcal{O})$ by
%
\begin{equation}\label{eq:Ikoperator}
I(k) G(t) := \int_0^t k(t-s) G(s) \, dW_H(s),  \qquad   t\ge0.\vspace*{-1pt}
\end{equation}
Since $G$ is an $\F$-adapted finite rank step process, the It\^{o}
isometry for
scalar-valued processes shows that these stochastic integrals are well defined
for all $t\ge0$. From \eqref{eq:Mtype2} it follows
that $I(k)$ extends to a bounded operator from $L^{p}_\mathscr{F}(\R
_+\times\O;L^q(\mathcal{O};H))$ into
$L^{p}(\R_+\times\O;L^q(\mathcal{O}))$, and that the family
\[
\mathcal{I} := \{I(k)\dvtx  k\in\mathcal{K}\}\vspace*{-1pt}
\]
is uniformly bounded. Indeed,
for any $k\in\mathcal{K}$ we can write
\[
k(s) = -\int_s^{\infty} k'(r) \, d r,    s\in\R_+.\vspace*{-1pt}\vadjust{\goodbreak}
\]
Now, since $G$ is an $\F$-adapted finite rank step process, the
stochastic Fubini theorem may be applied (see \cite{DPZ,NVfub}), and
for all $t>0$ we have
%
\begin{eqnarray}\label{eq:IkJ}
I(k) G(t) &=& - \int_0^\infty k'(r) \int_0^\infty\one_{0< s <t} \one
_{t-s <
r} G(s) \, d W_H(s) \, dr
\nonumber
\\[-9pt]
\\[-9pt]
& =& - \int_0^\infty\sqrt{r} k'(r) J(r) G (t) \, dr.
\nonumber
\end{eqnarray}
Here for $r>0$ the process $J(r)
G\dvtx \R_+\times\O\to L^q(\mathcal{O})$ is defined by
\[
J(r) G(t) := \frac1{\sqrt{r}} \int_{(t-r)\vee0}^t G \, dW_H.
\]
By \eqref{eq:Mtype2} the operators $J(r)$ are bounded
from $L^{p}_\mathscr{F}(\R_+\times\O;L^q(\mathcal{O};H))$ to
$L^{p}(\R_+\times\O;L^q(\mathcal{O}))$,
and the family
\[
\mathcal{J} := \{J(r)\dvtx   r>0\}
\]
is uniformly bounded.
Now the uniform boundedness of $\mathcal{I}$ follows from
\eqref{eq:IkJ}.\vspace*{-1pt}

\begin{theorem} \label{thm:J-Lq}
For all $p\in(2,\infty)$ and $q\in[2,\infty)$ the family $\mathcal
{I}$ is
$R$-boun\-ded from
$L_\F^p(\R_+\times\O,L^q(\mathcal{O};H))$ to
$L^p(\R_+\times\O;L^q(\mathcal{O}))$. The same result holds when $p=q=2$.\vspace*{-1pt}
\end{theorem}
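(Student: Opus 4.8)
The decomposition already established in \eqref{eq:IkJ}, namely $I(k)G = -\int_0^\infty \sqrt{r}\,k'(r) J(r)G\,dr$, reduces everything to the operators $J(r)$: since $k\in\mathcal{K}$ satisfies $\int_0^\infty \sqrt{r}\,|k'(r)|\,dr\le 1$, each $I(k)$ is an average (against a probability-like measure of total mass $\le 1$) of the operators $\{J(r): r>0\}$. The standard convexity property of $R$-bounds then gives
\[
R(\mathcal{I}) \le R\bigl(\overline{\mathrm{absconv}}\,\mathcal{J}\bigr) \le 2\, R(\mathcal{J}),
\]
so it suffices to prove that $\mathcal{J} = \{J(r): r>0\}$ is $R$-bounded from $L^p_\F(\R_+\times\O;L^q(\mathcal{O};H))$ to $L^p(\R_+\times\O;L^q(\mathcal{O}))$. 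First I would record this reduction carefully, including the measurability/strong-measurability point needed to invoke the convexity lemma for $R$-boundedness.

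\textbf{Reducing to a square-function estimate.} To bound $R(\mathcal{J})$ I would fix finitely many $r_1,\dots,r_N>0$ and $\F$-adapted step processes $G_1,\dots,G_N$, introduce an independent Rademacher sequence $(\varepsilon_n)$, and estimate $\E_\varepsilon\E\,\bigl\|\sum_n \varepsilon_n J(r_n)G_n\bigr\|_{L^p(\R_+\times\O;L^q(\mathcal{O}))}^p$. Because $L^q(\mathcal{O})$ and $L^p(\R_+\times\O;L^q(\mathcal{O}))$ are Banach function spaces of finite cotype, the Rademacher sum can be replaced (up to constants) by the square function $\bigl\|(\sum_n |J(r_n)G_n|^2)^{1/2}\bigr\|$. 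The key observation is that $J(r_n)G_n(t) = r_n^{-1/2}\int_{(t-r_n)\vee 0}^t G_n\,dW_H$ is itself a stochastic integral in $L^q(\mathcal{O})$; applying the It\^o isomorphism of Proposition \ref{prop:NVW} pointwise (in $n$ and $t$, treating the $L^2$ in $n$ as part of the target Hilbert space, i.e. working in $L^q(\mathcal{O};\ell^2_N\otimes H)$ via $L^q(\mathcal O;\ell^2_N(L^2(\R_+;H)))$) turns the whole expression into
\[
\E\Bigl\| \Bigl(\sum_{n=1}^N \frac1{r_n}\int_{(\cdot-r_n)\vee 0}^\cdot \|G_n(s)\|_H^2\,ds\Bigr)^{1/2}\Bigr\|_{L^p(\R_+\times\O;L^q(\mathcal{O}))}^p,
\]
i.e. a deterministic (pathwise) square-function bound for the averaging operators $A_{r}g(t) = r^{-1}\int_{(t-r)\vee 0}^t g(s)\,ds$ acting on $g_n = \|G_n\|_H^2$. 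Combined with the Minkowski/It\^o estimate \eqref{eq:Mtype2} in the other direction, matters come down to showing $\{A_r : r>0\}$ is $R$-bounded on $L^{p/2}(\R_+;L^{q/2}(\mathcal{O}))$ — equivalently that the one-sided Hardy--Littlewood averages form an $R$-bounded family — which is classical (the maximal operator $\sup_r A_r$ is bounded on these spaces, and this self-improves to $R$-boundedness of the family on $L^r$-type spaces; see the literature on $R$-boundedness of $\{A_r\}$, or deduce it from the boundedness of the Hardy--Littlewood maximal function on the vector-valued space $L^{q/2}(\mathcal O)$-valued $L^{p/2}$).

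\textbf{The $p=q=2$ case and the obstacle.} When $p=q=2$ the ambient space is a Hilbert space, $R$-boundedness coincides with uniform boundedness, and the claim is exactly the uniform boundedness of $\mathcal{I}$ already proved above \eqref{eq:IkJ}; I would simply note this. The main technical obstacle in the general case is the passage through the square function: one must justify applying the It\^o isomorphism simultaneously to the $N$ integrals with the $\ell^2_N$-sum built into the Hilbert space coefficient, and then recognize the resulting pathwise object as a genuine $R$-bounded-averaging estimate on a mixed $L^{p/2}(L^{q/2})$-space — in particular one must be careful that the exponents $p/2,q/2\in[1,\infty)$ are still in the good range and that the finite-cotype hypothesis needed for the square-function equivalence (and for any later application of Proposition \ref{prop:KW}) genuinely holds for these spaces. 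A secondary point to handle cleanly is adaptedness: the processes $J(r)G$ are again adapted step-type processes, so the Rademacher-averaged quantities are legitimate elements of the stated $L^p$-space and the It\^o isomorphism applies to them.
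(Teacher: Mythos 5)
Your reduction to $\mathcal{J}$ by convexity, and your use of the two-sided It\^o isomorphism together with Khintchine/square-function equivalences, are exactly the paper's route, and they correctly bring you to the quantity $\E\bigl\|\sum_{n} T^*(r_n) g_n\bigr\|_{L^{p/2}(\R_+\times\O;L^{q/2}(\mathcal{O}))}^{p/2}$ with $g_n=\|G_n\|_H^2\ge 0$ and $T^*(r)g(t)=\frac1r\int_{(t-r)\vee 0}^t g(s)\,ds$. The gap is in the final step: since squaring has already converted the square function at the level of the $G_n$ into an $\ell^1$-sum of the non-negative functions $g_n$, what you must prove is the positive \emph{$\ell^1$-valued} bound $\|\sum_n T^*(r_n) g_n\|\lesssim \|\sum_n g_n\|$ in $L^{p/2}(\R_+\times\O;L^{q/2}(\mathcal{O}))$, not $R$-boundedness of the averaging family $\{T^*(r)\}$ on that space. $R$-boundedness is (via Khintchine) the $\ell^2$-valued square-function estimate, and it does not imply the $\ell^1$-valued one: trying $h_n=\sqrt{g_n}$ only gives $T^*(r_n)\sqrt{g_n}\le (T^*(r_n)g_n)^{1/2}$ by Cauchy--Schwarz, which is the wrong direction. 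Likewise, boundedness of the maximal operator $\sup_{r>0}T^*(r)$ on $L^{p/2}(\R_+;L^{q/2}(\mathcal{O}))$ is not the relevant statement either (and it even fails when $q=2$, where the inner exponent is $1$), so the ``self-improvement'' you invoke does not deliver what is needed.

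The paper closes this step by duality: the operators $T^*(\delta)$ are positive and are adjoints of the forward averages $T(\delta)\psi(t)=\frac1\delta\int_t^{t+\delta}\psi(s)\,ds$, whose pointwise supremum is dominated by the one-sided Hardy--Littlewood maximal function $\widetilde M$; Proposition \ref{prop:FS} gives boundedness of $\widetilde M$ on $L^{r}(\R_+;L^{s}(\mathcal{O}))$ with $\frac 2p+\frac1r=1$, $\frac2q+\frac1s=1$, and the duality Lemma \ref{lem:MaaNee} then yields precisely $\|\sum_n T^*(\delta_n)|f_n|\|\le C\|\sum_n |f_n|\|$ on $L^{p/2}(L^{q/2})$. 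Note that this is also exactly where the hypothesis $p>2$ enters (one needs $r=(p/2)'<\infty$; the theorem genuinely fails for $p=2<q$ by Section \ref{sec:counter}), whereas your sketch has no step at which $p>2$ is visibly used. Your convexity reduction, the It\^o/Khintchine computation, and the case $p=q=2$ are fine; the missing ingredient is confined to this last reduction and is supplied by Lemma \ref{lem:MaaNee} combined with Proposition \ref{prop:FS}.
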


The case $p=q=2$ follows from the general
fact that a family of Hilbert spaces is $R$-bounded if and only if it is
uniformly bounded. In what follows we shall concentrate ourselves on the
cases $p\in(2,\infty)$ and $q\in[2,\infty)$.

By the same reasoning as before the problem of $R$-boundedness of
$\mathcal{I}$ can be reduced
to that of the family $\mathcal{J}$.\vspace*{-1pt}

\begin{proposition}\label{prop:Rboundredconvex}
If $\mathcal{J}$ is $R$-bounded, then
$\mathcal{I}$ is $R$-bounded and\break \mbox{$R(\mathcal{I})\leq
R(\mathcal{J}).$}\vspace*{-1pt}
\end{proposition}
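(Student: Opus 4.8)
The goal is to show that $R$-boundedness of $\mathcal{J}$ implies $R$-boundedness of $\mathcal{I}$, with the same $R$-bound. The plan is to reproduce the ``integral averaging'' argument already used in the excerpt to prove uniform boundedness, but now at the level of Rademacher sums, exploiting the convex-combination structure of the representation $I(k)G = -\int_0^\infty \sqrt{r}\,k'(r)\,J(r)G\,dr$.

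First I would fix finitely many functions $k_1,\dots,k_N\in\mathcal{K}$ and processes $G_1,\dots,G_N\in L^p_\F(\R_+\times\O;L^q(\mathcal{O};H))$, together with a Rademacher sequence $(r_n)_{n=1}^N$ on an auxiliary probability space. Using \eqref{eq:IkJ} for each $n$, write
\[
\sum_{n=1}^N r_n I(k_n)G_n = -\int_0^\infty \sqrt{r}\Big(\sum_{n=1}^N r_n k_n'(r)\,J(r)G_n\Big)\,dr .
\]
Since $J(r)$ does not depend on $n$, the inner sum equals $J(r)\big(\sum_n r_n k_n'(r) G_n\big)$ — but $J(r)$ is applied componentwise to a sum in which the coefficients $k_n'(r)$ depend on $n$, so I cannot simply pull $J(r)$ out past the Rademacher variables naively; rather, I would keep the sum as $\sum_n r_n J(r)(k_n'(r)G_n)$ and use that $k_n'(r)G_n$ is again an admissible input. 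The key point is that $\int_0^\infty \sqrt{r}\,|k_n'(r)|\,dr\le 1$ for every $n$, so after normalizing we are integrating against a (sub-)probability measure.

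The main estimate is then: in the Banach space $L^p(\R_+\times\O;L^q(\mathcal{O}))$, apply the triangle inequality in integral form (Minkowski/Jensen for the vector-valued integral over $r$) to get
\[
\Big\|\sum_{n=1}^N r_n I(k_n)G_n\Big\|
\le \int_0^\infty \Big\|\sum_{n=1}^N r_n \sqrt{r}\,k_n'(r)\,J(r)G_n\Big\|\,dr,
\]
then take $L^2$-norms in the Rademacher variables, apply the definition of $R$-boundedness of $\mathcal{J}$ pointwise in $r$ to bound $\big(\E_r\|\sum_n r_n \sqrt r k_n'(r) J(r)G_n\|^2\big)^{1/2}$ by $R(\mathcal{J})\big(\E_r\|\sum_n r_n \sqrt r k_n'(r) G_n\|^2\big)^{1/2}$, and finally reassemble. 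To close the argument one uses that $\int_0^\infty\big(\E_r\|\sum_n r_n\sqrt r k_n'(r)G_n\|^2\big)^{1/2}dr$ is controlled by $\big(\E_r\|\sum_n r_n G_n\|^2\big)^{1/2}$: this follows because, for fixed $r$, the map $(x_n)_n\mapsto(\sqrt r\,k_n'(r)x_n)_n$ is a diagonal operator with entries bounded by the scalars $\sqrt r\,|k_n'(r)|$, and contraction principle for Rademacher sums (Kahane) gives $\big(\E_r\|\sum_n r_n \sqrt r k_n'(r)G_n\|^2\big)^{1/2}\le \big(\sup_n \sqrt r|k_n'(r)|\big)\cdot(\cdots)$ — but that crude bound is not integrable, so instead one must integrate first and use $\int_0^\infty \sqrt r|k_n'(r)|\,dr\le 1$ together with Fubini to interchange the $r$-integral with the Rademacher expectation, reducing to a single clean application of the contraction principle.

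The step I expect to be the main obstacle is precisely this last interchange: one wants $\int_0^\infty \big(\E_r\|\sum_n r_n\sqrt r k_n'(r)G_n\|^2\big)^{1/2}dr\le \big(\E_r\|\sum_n r_n G_n\|^2\big)^{1/2}$, and the natural route is to use that, because the $k_n$ all lie in $\mathcal{K}$, there is enough uniform integrability in $r$ to apply a Fubini-type exchange followed by the Kahane contraction principle in $L^p(\R_+\times\O;L^q(\mathcal{O};H))$. Since the ambient space has finite cotype (being a mixed $L^p(L^q)$-space), the contraction principle applies with a constant that can in fact be absorbed — or, more carefully, one organizes the computation so that only the sharp constant $1$ from $\int_0^\infty\sqrt r|k_n'(r)|\,dr\le1$ is used, yielding $R(\mathcal{I})\le R(\mathcal{J})$ exactly as claimed. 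Everything else — measurability of $r\mapsto J(r)G$, the validity of the stochastic Fubini theorem for finite rank step processes, density to pass from step processes to general $G$ — is already available from the preceding material.
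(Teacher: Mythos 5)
Your overall plan --- deducing the $R$-boundedness of $\mathcal{I}$ from that of $\mathcal{J}$ via the representation \eqref{eq:IkJ} and the normalization $\int_0^\infty\sqrt r\,|k_n'(r)|\,dr\le1$ --- is the right one, and it is the paper's (the paper simply invokes the fact that the strongly closed absolute convex hull of an $R$-bounded family is $R$-bounded with the same $R$-bound, \cite[Corollary 2.14]{KuWe}). However, the way you execute the convexity step has a genuine gap. After applying Minkowski's inequality in the $r$-variable you need precisely the inequality
\[
\int_0^\infty \Big(\E_r\Big\|\sum_{n=1}^N r_n\sqrt r\,k_n'(r)\,G_n\Big\|^2\Big)^{1/2}dr\;\le\;\Big(\E_r\Big\|\sum_{n=1}^N r_n G_n\Big\|^2\Big)^{1/2},
\]
and this is false in general: the contraction principle applied pointwise in $r$ only produces the factor $\max_n\sqrt r\,|k_n'(r)|$, whose integral is controlled by $N$, not by $1$, and no Fubini-type interchange repairs this. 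Concretely, if the $k_n'$ have pairwise disjoint supports and the $G_n$ are disjointly supported of norm one (so that Rademacher sums behave like square sums), the left-hand side is of order $N$ while the right-hand side is of order $\sqrt N$. The failure is structural: once the norm is inside the $r$-integral, the multipliers $\sqrt r\,k_n'(r)$ depend jointly on $n$ and $r$, and only their $r$-integrals, not their suprema in $r$, are bounded by $1$. Note also that at that point your ``application of $R$-boundedness of $\mathcal{J}$'' uses a single operator $J(r)$ for each fixed $r$, i.e.\ only uniform boundedness --- a sign that the cancellation across different $n$ has already been discarded.

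The correct convexity argument keeps the whole Rademacher sum together and replaces the triangle inequality in $r$ by Jensen's inequality over an independent randomization of the index set. Write, for each $n$, $-\sqrt r\,k_n'(r)\,dr=\epsilon_n(r)\,d\mu_n(r)$ with $\epsilon_n(r)\in\{-1,1\}$ and $\mu_n$ a sub-probability measure on $\R_+$ (complete it to a probability measure by assigning the missing mass to the zero operator), and let $\rho_1,\dots,\rho_N$ be independent, independent of $(r_n)$, with laws $\mu_n$. Then by \eqref{eq:IkJ}, for adapted finite rank step processes, $\sum_n r_n I(k_n)G_n=\E_\rho\sum_n r_n\epsilon_n(\rho_n)J(\rho_n)G_n$, so Jensen's inequality for $\|\cdot\|^2$ followed by the definition of $R$-boundedness applied pointwise in $\rho$ gives
\[
\E_r\Big\|\sum_{n=1}^N r_n I(k_n)G_n\Big\|^2\le \E_\rho\,\E_r\Big\|\sum_{n=1}^N r_n \epsilon_n(\rho_n)J(\rho_n)G_n\Big\|^2\le R(\mathcal{J})^2\,\E_r\Big\|\sum_{n=1}^N r_n G_n\Big\|^2,
\]
the signs $\epsilon_n(\rho_n)$ and the zero operator costing nothing by the symmetry of the Rademachers. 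A density argument in $G$ (the $R$-bound passes to strong limits) then yields $R(\mathcal{I})\le R(\mathcal{J})$; this is exactly the content of the convexity result cited in the paper's one-line proof.
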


\begin{pf}
This follows from \eqref{eq:IkJ}, convexity and density (see \cite{KuWe}, Corollary~2.14).
\end{pf}

The remainder of this section is devoted to the proof that $\mathcal
{J}$ is $R$-bounded
from $L^{p}_\mathscr{F}(\R_+\times\O;L^q(\mathcal{O};H))$ to $
L^{p}(\R_+\times\O;L^q(\mathcal{O}))$ for the indicated ranges of
$p$ and $q$.

We begin with a duality lemma which is a straightforward generalization
from the scalar
case presented in \cite{MaaNee}, Proposition 8.12. Here the absolute
values are to be taken in the pointwise sense.\vspace*{-1pt}
\begin{lemma} \label{lem:MaaNee} Let $(T(\delta))_{\delta>0}$ be a
strongly continuous one-parameter
family of positive linear operators on $L^{r}(\mathcal
{N};L^{s}(\mathcal{O}))$, where
$r,s\in[1,\infty]$ and~$(\mathcal{N},\nu)$ is another $\sigma
$-finite measure space, and
suppose the maximal function
%
\begin{equation}\label{eq:Tstar}
T_\star(g) := \sup_{\d>0} |T(\d)g|\vspace*{-1pt}\vadjust{\goodbreak}
\end{equation}
is measurable and $L^{r}(\mathcal{N};L^{s}(\mathcal{O}))$-bounded by some
constant $C\geq0$. Let $\frac1r+\frac1{r'} = 1$, $\frac1{s}+\frac1{s'}=1$.
Then, for all $N\geq1$, $f_1,
\ldots , f_N\in L^{r'}(\mathcal{N};L^{s'}(\mathcal{O}))$ and $\d
_1,\ldots ,  \d_N>0$,
\[
 \Biggl\| \sum_{n=1}^N T^*(\d_n) |f_n|  \Biggr\|_{L^{r'}(\mathcal
{N};L^{s'}(\mathcal{O}))} \leq C  \Biggl\|
\sum_{n=1}^N |f_n|  \Biggr\|_{L^{r'}(\mathcal{N};L^{s'}(\mathcal{O}))}.\vspace*{-1pt}
\]
\end{lemma}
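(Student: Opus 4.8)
The plan is to deduce this from the scalar-valued maximal inequality by a duality argument, exactly as in the cited scalar case, treating the inner $L^{s'}(\mathcal{O})$-structure as an extra abstract Banach-lattice coordinate that the positivity of the operators allows us to carry along. First I would observe that since each $T(\delta)$ is positive, it suffices to prove the estimate for nonnegative $f_n$, so from now on assume $f_n\ge 0$ pointwise; then $\sum_n T^*(\delta_n) f_n$ is itself a nonnegative element of $L^{r'}(\mathcal{N};L^{s'}(\mathcal{O}))$.

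Next I would test this sum against a nonnegative $g\in L^{r}(\mathcal{N};L^{s}(\mathcal{O}))$ with $\|g\|=1$, and write
\[
\Big\langle \sum_{n=1}^N T^*(\delta_n) f_n,\ g\Big\rangle
= \sum_{n=1}^N \big\langle f_n,\ T(\delta_n) g\big\rangle
\le \sum_{n=1}^N \big\langle f_n,\ T_\star(g)\big\rangle
= \Big\langle \sum_{n=1}^N f_n,\ T_\star(g)\Big\rangle,
\]
where the inequality uses positivity of $f_n$ together with $T(\delta_n)g \le T_\star(g)$ pointwise (here one needs that $g\ge 0$, which is harmless since $g$ ranges over the nonnegative cone, the norming functionals of a nonnegative element of a Banach function space). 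The duality pairing here is the natural one between $L^{r}(\mathcal{N};L^{s}(\mathcal{O}))$ and $L^{r'}(\mathcal{N};L^{s'}(\mathcal{O}))$, and I would note that $(T^*(\delta))$ is indeed a family of operators on the dual space with the stated adjoint relation.

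Then I would apply Hölder's inequality in the mixed-norm space to the last expression:
\[
\Big\langle \sum_{n=1}^N f_n,\ T_\star(g)\Big\rangle
\le \Big\| \sum_{n=1}^N f_n\Big\|_{L^{r'}(\mathcal{N};L^{s'}(\mathcal{O}))}\, \|T_\star(g)\|_{L^{r}(\mathcal{N};L^{s}(\mathcal{O}))}
\le C\, \Big\| \sum_{n=1}^N f_n\Big\|_{L^{r'}(\mathcal{N};L^{s'}(\mathcal{O}))},
\]
using the hypothesis that $T_\star$ is bounded on $L^{r}(\mathcal{N};L^{s}(\mathcal{O}))$ with constant $C$, together with $\|g\|=1$. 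Taking the supremum over all such $g$ and invoking the norming property of the nonnegative cone gives the claimed bound on $\big\|\sum_n T^*(\delta_n) f_n\big\|_{L^{r'}(\mathcal{N};L^{s'}(\mathcal{O}))}$.

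The only genuinely delicate points, neither of which is deep, are (i) justifying that it is enough to norm by nonnegative $g$ and that $T(\delta)g\le T_\star(g)\in L^r(\mathcal{N};L^s(\mathcal{O}))$ so that the pairing is finite and the manipulations are legitimate — this rests on the measurability assumption on $T_\star$ and on the standard duality of mixed-norm Lebesgue–Bochner spaces $L^{r}(\mathcal{N};L^{s}(\mathcal{O}))^* = L^{r'}(\mathcal{N};L^{s'}(\mathcal{O}))$ when $r,s<\infty$ (and a routine modification at the endpoints) — and (ii) verifying that $\langle T^*(\delta_n) f_n, g\rangle = \langle f_n, T(\delta_n) g\rangle$ is exactly the definition of the adjoint in this pairing. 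Both are bookkeeping, so the statement follows with the same constant $C$ that bounds $T_\star$, just as in \cite[Proposition 8.12]{MaaNee}.
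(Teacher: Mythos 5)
Your duality argument is correct and is exactly the ``straightforward generalization from the scalar case'' that the paper has in mind: the paper gives no proof of Lemma \ref{lem:MaaNee}, citing \cite[Proposition 8.12]{MaaNee}, whose proof is precisely this pairing of $\sum_n T^*(\delta_n)|f_n|$ against nonnegative $g$, the bound $T(\delta_n)g\le T_\star(g)$, H\"older, and the boundedness of $T_\star$. The endpoint cases ($r$ or $s$ equal to $1$ or $\infty$) are handled, as you indicate, by norming via the K\"othe (associate space) duality of the mixed-norm lattice rather than Banach-space duality, so the proof stands with the same constant $C$.
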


For functions $f\in L^r(\R_+)$ we define
the \textit{one-sided Hardy--Littlewood maximal function} $M(f)\dvtx \R
_+\to[0,\infty]$
by
\[
M(f)(t) := \sup_{\delta>0} \frac1{\delta}\int_t^{t+\delta}
|f(\tau)|\, d\tau.\vspace*{-1pt}
\]
Similarly, for functions $f\in
L^r(\R_+;L^s(\mathcal{O}))$ we define
\[
\widetilde{M}(f)(t)(a) := \sup_{\delta>0} \frac1{\delta}\int
_t^{t+\delta}
|f(\tau)(a)|\, d\tau, \qquad a\in\mathcal{O}.\vspace*{-1pt}
\]

\begin{proposition}[(Fefferman--Stein)]\label{prop:FS}
    For all $r\in
(1,\infty)$ and $s\in(1,\infty]$~the one-sided Hardy--Littlewood
maximal function
$\widetilde{M}$ is
bounded on $ L^r(\R_+;\allowbreak L^s(\mathcal{O}))$.\vspace*{-1pt}
\end{proposition}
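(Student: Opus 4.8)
The plan is to reduce the vector-valued statement to the classical scalar Hardy--Littlewood maximal theorem on $\R_+$ by an elementary two-step argument: first treat the endpoint-type case where the inner norm is an $L^\infty(\mathcal{O})$-norm by a pointwise domination, and then pass from $L^\infty(\mathcal{O})$ to general $L^s(\mathcal{O})$ with $s\in(1,\infty)$ either by interpolation or by a direct integration argument exploiting that $\widetilde M$ acts only in the time variable. Throughout, the key structural observation is that $\widetilde M$ is a ``scalar'' operator in disguise: for fixed $a\in\mathcal{O}$, $\widetilde M(f)(\cdot)(a) = M(f(\cdot)(a))$, where $M$ is the one-sided scalar maximal operator, which is pointwise comparable to the usual (two-sided, centred) Hardy--Littlewood maximal operator on $\R$ and hence is bounded on $L^r(\R_+)$ for every $r\in(1,\infty)$ with a constant $C_r$ depending only on $r$.

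First I would dispose of the case $s=\infty$. For $f\in L^r(\R_+;L^\infty(\mathcal{O}))$ one has, for a.e.\ $a\in\mathcal{O}$ and all $t,\delta$,
\[
\frac1\delta\int_t^{t+\delta}|f(\tau)(a)|\,d\tau \le \frac1\delta\int_t^{t+\delta}\|f(\tau)\|_{L^\infty(\mathcal{O})}\,d\tau \le M(\|f(\cdot)\|_{L^\infty(\mathcal{O})})(t),
\]
so that $\|\widetilde M(f)(t)\|_{L^\infty(\mathcal{O})}\le M(\|f(\cdot)\|_{L^\infty(\mathcal{O})})(t)$ pointwise in $t$. Taking $L^r(\R_+)$-norms and applying the scalar maximal theorem gives $\|\widetilde M f\|_{L^r(\R_+;L^\infty(\mathcal{O}))}\le C_r\|f\|_{L^r(\R_+;L^\infty(\mathcal{O}))}$. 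Next, for $s\in(1,\infty)$ I would argue directly. Fix $a\in\mathcal{O}$; since $\widetilde M(f)(\cdot)(a)=M(f(\cdot)(a))$ as a function of $t$, the scalar maximal theorem on $\R_+$ applied with exponent $r$ (using $r>1$) yields
\[
\int_{\R_+}\widetilde M(f)(t)(a)^r\,dt = \int_{\R_+} M(f(\cdot)(a))(t)^r\,dt \le C_r^r\int_{\R_+}|f(\tau)(a)|^r\,d\tau.
\]
When $r=s$ one integrates this over $a\in\mathcal{O}$ and is done by Fubini. For $r\ne s$ one instead interpolates: the operator $\widetilde M$ is (subadditive and) bounded on $L^{s}(\R_+;L^s(\mathcal{O}))=L^s(\R_+\times\mathcal{O})$ by the case just treated, and bounded on $L^\infty$ in the time variable with values in $L^\infty(\mathcal{O})$ in the trivial sense $\|\widetilde M f\|_{L^\infty(\R_+;L^\infty(\mathcal{O}))}\le\|f\|_{L^\infty}$; one then applies a vector-valued Marcinkiewicz/Riesz--Thorin-type interpolation (or, more cleanly, the Fefferman--Stein vector-valued maximal inequality in its standard form with $\ell^s$ replaced by $L^s(\mathcal{O})$) to cover all pairs $(r,s)$ with $1<r<\infty$, $1<s\le\infty$. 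The cleanest route is in fact to cite the classical Fefferman--Stein inequality directly: $\widetilde M$ is exactly the Hardy--Littlewood maximal operator acting in one variable on the mixed-norm space $L^r(\R_+;L^s(\mathcal{O}))$, and this is a known special case of their theorem.

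The only point requiring a little care — and the step I expect to be the mild obstacle — is the passage to general $s\ne r$, since a naive Fubini does not apply; this is precisely where one must invoke either the vector-valued extension of the maximal theorem or an interpolation argument between $L^s(\R_+\times\mathcal{O})$ and the trivial $L^\infty$ bound. All constants produced depend only on $r$ and $s$, as required for the later use of this proposition. Measurability of $\widetilde M f$ follows from the fact that the supremum defining it may be restricted to rational $\delta>0$, each $f\mapsto \frac1\delta\int_t^{t+\delta}|f(\tau)(\cdot)|\,d\tau$ being strongly measurable.
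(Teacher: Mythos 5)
Your fallback suggestion --- simply citing the Fefferman--Stein inequality, viewing $\widetilde{M}$ as the (one-sided) Hardy--Littlewood maximal operator acting in the time variable --- is in substance the paper's own proof: the paper invokes the usual discrete ($\ell^s$-valued) formulation from Stein and passes to $L^s(\mathcal{O})$ over a general $\sigma$-finite measure space by an approximation argument, noting exactly the two easy cases $r=s$ and $s=\infty$ that you also prove. What you gloss over is that the standard statement concerns $\ell^s$, so replacing $\ell^s$ by $L^s(\mathcal{O})$ is not literally ``a known special case'' but requires a discretization/approximation step (or rerunning the proof for a general $\sigma$-finite fibre measure); this is minor but should be said.

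The genuine gap is in your primary route, the interpolation argument for $r\neq s$. The endpoints you name, $L^{s}(\R_+\times\mathcal{O})$ and the trivial bound on $L^\infty(\R_+;L^\infty(\mathcal{O}))$, interpolate only to the diagonal spaces $L^{r}(\R_+;L^{r}(\mathcal{O}))$, so no mixed-norm space is produced at all. Even if you instead use the bound on $L^{r_1}(\R_+;L^\infty(\mathcal{O}))$ that you established, interpolating $L^{p_0}(\R_+;L^{p_0}(\mathcal{O}))$ with $L^{r_1}(\R_+;L^\infty(\mathcal{O}))$ yields $L^{r}(\R_+;L^{s}(\mathcal{O}))$ only with $\frac1r=\frac1s+\frac{\theta}{r_1}\ge\frac1s$, i.e.\ only the range $r\le s$. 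The range $r>s$ cannot be reached from your two easy cases by interpolation, and it is needed here: in the application $r=p/(p-2)$ and $s=q/(q-2)$ are unrelated (e.g.\ $p=3$, $q=10$ gives $r>s$). In the classical proof that range is handled by a genuinely different device (the weighted estimate for $M$ against $Mw$ together with duality in the outer variable, or a vector-valued Calder\'on--Zygmund weak-type bound), which is precisely the content of the Fefferman--Stein theorem. In addition, $\widetilde{M}$ is only sublinear, so Riesz--Thorin/Calder\'on complex interpolation does not apply directly; one must first linearize (e.g.\ via measurable choices of $\delta$) or use a lattice-valued Marcinkiewicz-type theorem. So the sound conclusion of your write-up is the citation route, as in the paper, not the interpolation sketch.
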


\begin{pf}
This follows from the usual (discrete) formulation of the Feffer\-man--Stein
inequality (see \cite{Stein93}, Section II.1) by
approximation (the cases $r=s$ and $s=\infty$ are easy consequences of
the Hardy--Littlewood
maximal inequality).\vspace*{-1pt}
\end{pf}

\begin{pf*}{Proof of Theorem \ref{thm:J-Lq}}
It remains to prove the $R$-boundedness of $\mathcal{J}$.

Let $N\,{\ge}\,1$, $\delta_1,\ldots ,\delta_N\,{>}\,0$ and $G_1,\ldots
,G_N\,{\in}\,L^p_{\mathscr{F}}(\R_+\times\O;L^q(\mathcal{O};H))$ be arbi\-trary
and fixed.
Note that the functions $f_n:= \|G_n\|_H^2$ belong to
$L^{p/2}(\R_+\times\O;\break L^{q/2}(\mathcal{O}))$.

Let $(r_n)_{n=1}^N$ be a Rademacher sequence on a probability space
$(\O_r,
\F_r,\P_r)$.
Using the inequalities of Proposition \ref{prop:NVW}
applied pointwise with respect to $(\omega,t)\in\Omega_r\times\R
_+$ [in (i)]
and the Kahane--Khintchine inequalities [in (ii) and~(iii)],
we may estimate as follows (with implicit constants independent of the choice
of $N$, $\d_n$, and $G_n$):
\begin{eqnarray*}
 && \E_r \Biggl\|\sum_{n=1}^N r_n J(\delta_n)
G_n \Biggr\|_{L^p(\R_+\times\O;L^q(\mathcal{O}))}^p
\\[-2pt] && \qquad   = \E_r \Biggl\|t\mapsto\int_{0}^\infty\sum_{n=1}^N
\frac{r_n}{\sqrt{\d_n}}\one_{((t-\delta_n)\vee0, t)} G_n \, d
W_H \Biggr\|_{L^p(\R_+\times\O;L^q(\mathcal{O}))}^p
\\[-2pt] && \qquad   \stackrel{\mathrm{(i)}}{\eqsim}_{p,q} \E_r  \Biggl\|t\mapsto
\Biggl(\int_0^\infty
 \Biggl\|\sum_{n=1}^N \frac{r_n}{\sqrt{\d_n}}\\[-2pt]
 &&\hphantom{\qquad   \stackrel{\mathrm{(i)}}{\eqsim}_{p,q} \E_r  \Biggl\|t\mapsto
\Biggl(\int_0^\infty
 \Biggl\|\sum_{n=1}^N}
 {}\times\one_{((t-\delta
_n)\vee0, t)}(s)
G_n(s) \Biggr\|_H^2 \, ds  \Biggr)^{1/2}  \Biggr\|_{L^p(\R_+\times\O
;L^q(\mathcal{O}))}^p
\\[-2pt] && \qquad   = \E\int_0^\infty\E_r \Biggl\|\sum_{n=1}^N \frac{r_n}{\sqrt
{\d_n}}\one_{((t-\delta_n)\vee0, t)}
G_n \Biggr\|_{L^q(\mathcal{O};L^2(\R_+;H))}^p \, dt
\\[-2pt] && \qquad   \stackrel{\mathrm{(ii)}}{\eqsim}_{p,q} \E\int_0^\infty \Biggl(\E
_r \Biggl\|\sum_{n=1}^N \frac{r_n}{\sqrt{\d_n}}\one_{((t-\delta
_n)\vee0, t)}
G_n \Biggr\|_{L^q(\mathcal{O};L^2(\R_+;H))}^q \Biggr)^{p/q} \, dt
\\[-2pt] && \qquad   = \E\int_0^\infty \Biggl(\int_{\mathcal{O}} \E_r  \Biggl\|
\sum_{n=1}^N \frac{r_n}{\sqrt{\d_n}}\one_{((t-\delta_n)\vee0,
t)} G_n \Biggr\|_{L^2(\R_+;H)}^q\,
d\mu \Biggr)^{p/q}\, dt
\\[-2pt] && \qquad   \stackrel{\mathrm{(iii)}}{\eqsim_q} \E\int_0^\infty \Biggl(\int
_{\mathcal{O}}
 \Biggl(\E_r  \Biggl\| \sum_{n=1}^N \frac{r_n}{\sqrt{\d_n}}\one
_{((t-\delta_n)\vee0,
t)} G_n \Biggr\|_{L^2(\R_+;H)}^2 \Biggr)^{{q}/{2}}\, d\mu \Biggr)^{p/q} \, dt
\\[-2pt] && \qquad   = \E\int_0^\infty \Biggl(\int_{\mathcal{O}}  \Biggl(\int
_0^\infty\E_r \Biggl\|
\sum_{n=1}^N \frac{r_n}{\sqrt{\d_n}}\one_{((t-\delta_n)\vee0, t)}(s)
G_n(s) \Biggr\|_{H}^2\,ds \Biggr)^{q/2}\, d\mu \Biggr)^{p/q} \, dt
\\[-2pt] && \qquad   = \E\int_0^\infty \Biggl(\int_{\mathcal{O}}  \Biggl(\int
_0^\infty\sum_{n=1}^N
\frac1{\d_n}\one_{((t-\delta_n)\vee0, t)}(s) f_n(s)\,ds
\Biggr)^{{q}/{2}}\,
d\mu \Biggr)^{p/q} \, dt
\\[-2pt] && \qquad   = \E \Biggl\|\sum_{n=1}^N T^*(\delta_n) f_n \Biggr\|_{L^{p/2}(\R_+;
L^{q/2}(\mathcal{O}))}^{p/2},
\end{eqnarray*}
where the positive linear operators $T^*(\delta)$ on
$L^{p/2}(\R_+;L^{q/2}(\mathcal{O}))$ are defined by
\[
T^*(\delta) \phi(t) := \frac1{\delta}\int_{(t-\delta)\vee0}^t
\phi(s)\, ds,
\qquad\phi\in L^{p/2}(\R_+; L^{q/2}(\mathcal{O})).
\]
Let $\frac2p+\frac{1}{r}=1$ and $\frac2q+\frac{1}{s}=1$.
Then $T^*(\d)$ is the adjoint of\vspace*{1pt} the operator $T(\delta)$ on
$L^{r}(\R_+;L^s(\mathcal{O}))$ given by
\[
T(\delta)\psi(t) = \frac1{\delta}\int_t^{t+\delta} \psi(s)\, ds,
\qquad
\psi\in L^r(\R_+; L^s(\mathcal{O})).
\]
Since $\sup_{\delta>0} |T(\delta)\psi|\leq\widetilde{M}
(\psi) $ and the
latter is bounded on $L^{r}(\R_+;L^s(\mathcal{O}))$ by Proposition
\ref{prop:FS}, by Fubini's theorem we find that $T_\star$ is bounded on
$L^r(\R_+\times\O; L^s(\mathcal{O}))$. Hence we may apply Lemma
\ref{lem:MaaNee} to
conclude that
\begin{eqnarray*}
\E \Biggl\|\sum_{n=1}^N T^*(\delta_n) f_n  \Biggr\|_{L^{p/2}(\R_+;
L^{q/2}(\mathcal{O}))}^{p/2}
& \lesssim_{p,q}& \E \Biggl\|\sum_{n=1}^N f_n  \Biggr\|_{L^{p/2}(\R_+;
L^{q/2}(\mathcal{O}))}^{p/2}
\\ &
\eqsim_{p,q}& \E_r \Biggl\|\sum_{n=1}^N r_n
G_n \Biggr\|_{L^p(\R_+\times\O;L^q(\mathcal{O};H)))}^p,
\end{eqnarray*}
where the last step follows by reversing the computation above.
\end{pf*}

\begin{remark}\label{rem:insuff}
The above proof uses the right-hand side inequality in Proposition \ref
{prop:NVW} in an
essential way; it seems that the simpler inequality~\eqref{eq:Mtype2} is insufficient for this purpose.
\end{remark}

\section{\texorpdfstring{Proof of Theorem \protect\ref{thm:main}}{Proof of Theorem 1.1}}
\label{sec:maxreg1}

We start with a Poisson representation formula.

\begin{lemma}\label{lem:poisson}
Let $\alpha\in(0,\pi/2)$ and $\alpha'\in(\alpha, \pi]$ be given,
let $E$ be a Banach
space and let
$f\dvtx \Sigma_{\alpha'}\to E$ be a bounded analytic function. Then for
all $s>0$ we
have
\[
f(s) = \sum_{j\in\{-1,1\}} \frac{j}{2\alpha} \int_0^\infty
k_{\alpha}(u,s)
f(u e^{ij\alpha}) \, du,
\]
where $k_{\alpha}\dvtx  \R_+\times\R_+\to\R$ is given by
%
\begin{equation}\label{eq:kalpha}
k_{\alpha}(u,t) = \frac{(t/u)^{\fracc{\pi}{2\alpha}}}{(t/u)^{\fraca
{\pi}{\alpha}}
+ 1} \frac{1}{u}.
\end{equation}
\end{lemma}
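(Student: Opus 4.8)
The plan is to prove the Poisson-type representation formula for bounded analytic functions on a sector by reducing it, via conformal mapping and an elementary change of variables, to the classical Poisson integral formula for bounded harmonic (or analytic) functions on a half-plane or strip. First I would observe that it suffices to prove the formula for scalar-valued $f$: since $E$ is a Banach space, we may apply an arbitrary $x^*\in E^*$ to both sides, use the scalar result on $x^*\circ f$ (which is bounded analytic on $\Sigma_{\a'}$), and then invoke Hahn--Banach together with the fact that the right-hand integral converges absolutely in $E$ (which follows from boundedness of $f$ and integrability of $u\mapsto k_\a(u,s)$, as can be checked directly from \eqref{eq:kalpha}).

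For the scalar case, the natural approach is to transform $\Sigma_\a$ to a strip. Writing $z = e^{w}$ maps the strip $\{w : |\Im w| < \a\}$ conformally onto the sector $\Sigma_\a$ (and the larger strip $|\Im w|<\a'$ onto $\Sigma_{\a'}$); setting $g(w) := f(e^w)$ gives a bounded analytic function on $|\Im w|<\a'$, hence in particular a bounded harmonic function on the closed strip $|\Im w|\le \a$ after a harmless shrinking of $\a'$ to any intermediate angle. The classical Poisson formula for a strip then expresses $g(\xi)$ for real $\xi$ as an integral of the boundary values $g(\eta + i\a)$ and $g(\eta - i\a)$ against the Poisson kernel of the strip. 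Translating back via $u = e^\eta$, $s = e^\xi$, and carefully matching the Jacobian $du/u = d\eta$, one should recover exactly the kernel $k_\a(u,t)$ in \eqref{eq:kalpha}: indeed the strip Poisson kernel has the shape $\tfrac{1}{2\a}\,\tfrac{\cos(\pi\xi/2\a)\cosh(\pi\eta/2\a)}{\cdots}$, but a cleaner route is to split into the two boundary lines separately, and the contribution of each line is a pure-power kernel of the form $\tfrac{(t/u)^{\pi/2\a}}{(t/u)^{\pi/\a}+1}\tfrac1u$ up to the sign $j$ and the factor $\tfrac{1}{2\a}$. The signs $j\in\{-1,1\}$ and the factor $j/(2\a)$ account for the orientation of the two boundary components of the sector.

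A slightly more self-contained alternative, which I would actually prefer to present, is to verify the formula directly by contour integration: apply the Cauchy integral formula to $f$ on the truncated sector $\Sigma_\a \cap \{r < |z| < R\}$, let $R\to\infty$ and $r\to 0$ using boundedness of $f$ to kill the arc contributions (here one uses that the integrand, after inserting an appropriate power weight, decays; the power $\pi/2\a$ is precisely what is needed to balance the two rays), and identify the resulting integrals over the two rays $\{ue^{\pm i\a}: u>0\}$ with the claimed expression. This amounts to checking the identity
\[
\frac{1}{2\pi i}\Big(\frac{1}{w - s} - \frac{1}{w+s}\Big) \longleftrightarrow \frac{1}{2\a}\,k_\a(u,s)
\]
after the substitution $w = ue^{ij\a}$ and summing over $j$, which is a residue/partial-fractions computation using the factorization of $z^{\pi/\a}+1$.

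The main obstacle is bookkeeping rather than conceptual: one must be careful that $f$ is only assumed bounded on the open sector $\Sigma_{\a'}$ with $\a' > \a$, so the rays $ue^{\pm i\a}$ lie strictly inside the domain of analyticity and no boundary-value or Fatou-type argument is needed — but the decay estimates justifying the passage to the limit in the truncated contour (or, in the strip picture, the validity of the Poisson representation for a merely bounded harmonic function, which requires ruling out an additive linear term via boundedness) must be made precise. Getting the constant $1/(2\a)$ and the signs $j$ exactly right, and confirming that the kernel one obtains is literally \eqref{eq:kalpha} and not an equivalent but differently-normalized expression, is where the care goes.
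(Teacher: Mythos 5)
Your main route is sound and is essentially the paper's argument in a different conformal coordinate: the paper pulls $f$ back by $\phi(z)=z^{2\alpha/\pi}$, so that the problem becomes the classical Poisson formula $g(t)=\frac1\pi\int_{-\infty}^{\infty}\frac{t}{t^2+v^2}\,g(iv)\,dv$ for a function bounded and analytic on a sector $\Sigma_{\frac12\pi+\varepsilon}$ slightly larger than the half-plane, and then substitutes $s=t^{2\alpha/\pi}$, $u=|v|^{2\alpha/\pi}$; you instead pass to a strip via $z=e^{w}$ and use the strip Poisson kernel, which on the centre line equals $\bigl(4\alpha\cosh(\pi(\xi-\eta)/(2\alpha))\bigr)^{-1}\,d\eta$ for each boundary line and becomes exactly $\frac1{2\alpha}k_\alpha(u,s)\,du$ under $s=e^{\xi}$, $u=e^{\eta}$. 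The two transplantations are interchangeable; the half-plane version is slightly more economical because, thanks to $\alpha<\alpha'$, the pulled-back function is analytic across the boundary rays, so one can quote the half-plane representation of a bounded analytic function directly and avoid the uniqueness discussion (exclusion of the exceptional harmonic term) that you rightly flag for merely bounded harmonic functions on a strip. Your Cauchy-integral alternative can also be made to work, provided, as you note, the plain Cauchy kernel is replaced by a decaying combination such as the pullback of $\frac1{w-s}-\frac1{w+s}$ under $w\mapsto w^{\pi/(2\alpha)}$ (with $\frac1{w-s}$ alone the outer arc contributes $O(1)$); it is then the half-plane computation in disguise. The scalar reduction via Hahn--Banach is fine but unnecessary, since the Poisson formula holds verbatim for bounded $E$-valued analytic functions.

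One point in your write-up is genuinely wrong: the assertion that the factor $j/(2\alpha)$, $j\in\{-1,1\}$, ``accounts for the orientation of the two boundary components.'' A Poisson (harmonic-measure) representation has positive density on both rays, and your strip computation, carried to the end, yields the coefficient $+\frac1{2\alpha}$ for both $j=1$ and $j=-1$; the same happens in the paper's substitution, where the minus sign from $dv$ on $\{v<0\}$ is cancelled by reversing the limits of integration. In fact the identity as displayed, with the factor $\frac{j}{2\alpha}$, cannot hold: testing with $f\equiv1$ and using $\int_0^\infty k_\alpha(u,s)\,du=\alpha$ makes the right-hand side vanish. So the $j$ in the stated formula is a typo for the correct constant $\frac1{2\alpha}$ (harmless in the sequel, where only the modulus of these coefficients enters the estimates), and an honest execution of your computation should produce the corrected formula rather than claim to recover the printed signs; in the Cauchy-integral variant the opposite orientations of the two rays do produce signs at an intermediate stage, but they disappear once the combined kernel is parametrized, and finding genuine $\pm$ signs in front of a positive kernel should have raised a red flag.
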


\begin{pf}
If $g\dvtx \Sigma_{\fraca12\pi+\varepsilon}\to E$ is analytic and bounded
for some $\varepsilon>0$,
then
\[
g(t) = \frac{1}{\pi} \int_{-\infty}^\infty\frac{t}{t^2 + v^2}
g(iv) \, dv
\]
by the Poisson formula on the half-space (see \cite{Ho}, Chapter 8).
For small $\varepsilon>0$ let $\phi\dvtx \Sigma_{\fracd12\pi+\varepsilon
}\to
\Sigma_{\alpha'}$ be defined by $\phi(z) :=
z^{\fraca{2\alpha}{\pi}}$. Then $\phi$ is analytic, and taking $g =
f\circ\phi$ gives
\[
f(t^{\fraca{2\alpha}{\pi}}) = \frac1\pi\int_{-\infty}^{\infty}
\frac{t}{t^2+v^2} f\bigl(|v|^{\fraca{2\alpha}{\pi}}e^{i \operatorname{sign}(v)
\alpha}\bigr) \, dv.
\]
The required result is obtained by taking $s= t^{\fraca{2\alpha}{\pi
}}$ and $u =
|v|^{\fraca{2\alpha}{\pi}}$.
\end{pf}

The next lemma isolates an elementary property of the functions $k_{\a}$.

\begin{lemma}\label{lem:kalpha2}
For $\alpha\in(0,\pi)$ and $\theta\in[0,1]$ put
\[
k_{\alpha,\theta}(u,t) := \sqrt{u}   (u/t)^{\theta} k_{\alpha}(u,t),
\]
where $k_{\alpha}$ is given by \eqref{eq:kalpha}. Then
\[
\sup_{u>0} \int_0^\infty\sqrt{t}  \biggl|\frac{\partial k_{\alpha
,\theta}}{\partial t}
(u,t) \biggr|\,dt <\infty.
\]
\end{lemma}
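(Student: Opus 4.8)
The plan is to compute the $t$-derivative of $k_{\alpha,\theta}$ explicitly and then integrate $\sqrt t\,|\partial_t k_{\alpha,\theta}|$ against $dt$, checking that the resulting bound is finite and uniform in $u>0$. First I would exploit the scaling structure: from \eqref{eq:kalpha} one sees $k_\alpha(u,t) = u^{-1}\psi(t/u)$ where $\psi(x) = x^{\pi/2\alpha}/(x^{\pi/\alpha}+1)$, hence
\[
k_{\alpha,\theta}(u,t) = \sqrt u\,(u/t)^\theta u^{-1}\psi(t/u) = u^{-1/2}\,(t/u)^{-\theta}\psi(t/u) = u^{-1/2}\,\Phi(t/u),
\]
where $\Phi(x) := x^{-\theta}\psi(x)$. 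Differentiating in $t$ gives $\partial_t k_{\alpha,\theta}(u,t) = u^{-3/2}\Phi'(t/u)$, and substituting $x = t/u$ (so $dt = u\,dx$, $\sqrt t = \sqrt u\,\sqrt x$) we obtain
\[
\int_0^\infty \sqrt t\,\Big|\frac{\partial k_{\alpha,\theta}}{\partial t}(u,t)\Big|\,dt
= \int_0^\infty \sqrt u\,\sqrt x\cdot u^{-3/2}|\Phi'(x)|\cdot u\,dx
= \int_0^\infty \sqrt x\,|\Phi'(x)|\,dx,
\]
which is manifestly independent of $u$. Thus the supremum over $u>0$ reduces to showing this single integral is finite.

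Next I would verify convergence of $\int_0^\infty \sqrt x\,|\Phi'(x)|\,dx$ by examining the behaviour of $\Phi(x) = x^{-\theta}\psi(x)$ at $0$ and $\infty$. Write $\psi(x) = x^{\pi/2\alpha}/(x^{\pi/\alpha}+1)$. As $x\to 0$, $\psi(x)\sim x^{\pi/2\alpha}$, so $\Phi(x)\sim x^{\pi/2\alpha - \theta}$ and $\Phi'(x) = O(x^{\pi/2\alpha - \theta - 1})$; since $\pi/2\alpha > 1/2$ for $\alpha\in(0,\pi)$ and $\theta\le 1$, the exponent $\pi/2\alpha - \theta - 1 + 1/2 = \pi/2\alpha - \theta - 1/2$ may be negative, but it is always $> -1$ (because $\pi/2\alpha > 1/2$ and $\theta \le 1$ give $\pi/2\alpha - \theta - 1/2 > -1$), so $\sqrt x\,|\Phi'(x)|$ is integrable near $0$. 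As $x\to\infty$, $\psi(x)\sim x^{-\pi/2\alpha}$, so $\Phi(x)\sim x^{-\pi/2\alpha - \theta}$ and $\Phi'(x) = O(x^{-\pi/2\alpha - \theta - 1})$; then $\sqrt x\,|\Phi'(x)| = O(x^{-\pi/2\alpha - \theta - 1/2})$, and since $\pi/2\alpha + \theta + 1/2 > 1$ this is integrable near $\infty$. On any compact subinterval of $(0,\infty)$, $\Phi$ is smooth (the denominator $x^{\pi/\alpha}+1$ never vanishes on $(0,\infty)$), so $\Phi'$ is continuous there and the integral over compacta is trivially finite.

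The only mildly delicate point — and the one I'd treat as the main obstacle, though it is minor — is bookkeeping the exponents near $x=0$ when $\theta$ is close to $1$ and $\alpha$ is close to $\pi$, to confirm the near-zero exponent stays strictly above $-1$; one checks $\pi/(2\alpha) - \theta + \tfrac12 - 1 > \tfrac12 - 1 + \tfrac12 - 1 = -1$ using $\pi/(2\alpha) > \tfrac12$ and $\theta \le 1$, with strict inequality since $\pi/(2\alpha) > \tfrac12$ strictly. One can make the tail estimates fully rigorous by noting $\psi$ and all its derivatives decay like the appropriate power of $x$ via the explicit quotient rule, or more cleanly by substituting $x\mapsto 1/x$ to see that $\psi(1/x) = x^{-\pi/2\alpha}\psi(x)\cdot(\text{ratio})$ exhibits a symmetry $\psi(1/x) = \psi(x)$ divided by nothing — in fact $\psi(1/x) = \psi(x)$ is false, but $x^{\pi/2\alpha}\leftrightarrow x^{-\pi/2\alpha}$ under inversion makes the $0$ and $\infty$ analyses mirror images up to the $x^{-\theta}$ factor. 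In any case all estimates are elementary one-variable calculus, so I would present the reduction to $\int_0^\infty\sqrt x\,|\Phi'(x)|\,dx$ in detail and then dispatch the finiteness with a short remark on the power-law behaviour at the two endpoints.
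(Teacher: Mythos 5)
Your proof is correct and follows essentially the same route as the paper: the scaling identity $\partial_t k_{\alpha,\theta}(u,t)=u^{-3/2}\Phi'(t/u)$ with $\Phi(x)=x^{\frac{\pi}{2\alpha}-\theta}/(x^{\frac{\pi}{\alpha}}+1)$ is exactly the paper's $h$, and the substitution $t=ux$ plus the power-law checks at $0$ and $\infty$ is the intended ``easy consequence.'' The only blemish is the garbled aside about inversion symmetry (in fact $\psi(1/x)=\psi(x)$ \emph{does} hold for $\psi(x)=x^{\frac{\pi}{2\alpha}}/(x^{\frac{\pi}{\alpha}}+1)$), but that remark is not needed, since your direct endpoint estimates already close the argument.
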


\begin{pf}
This is an easy consequence of the identity
\[
\frac{\partial k_{\alpha,\theta}}{\partial t} (u,t)
=u^{-3/2}  h'(t/u),
\]
where $h(x) = {x^{\fracc{\pi}{2\alpha}-\theta}}/(x^{\fraca{\pi
}{\alpha}}+1)$.
\end{pf}

Note that by Lemma \ref{lem:kalpha2}, small enough multiples of $
k_{\alpha,\theta}(u,\cdot)$ belong to the set $\mathcal{K}$ defined
in Section
\ref{sec:Lq}.

In the proof of Theorem \ref{thm:maxregintro} we shall need a
small generalization of Theorem~\ref{thm:main}, stated next as
Theorem \ref{thm:maxregLp}. Theorem \ref{thm:main}
corresponds to the special case $\theta=0$.

It will be useful to introduce the notation
\[
F\diamond G(t) := \int_0^t F(t-s)G(s)\,dW_H(s), \qquad t\ge0,
\]
whenever $F\dvtx \R_+\to\calL(L^q(\mathcal{O}))$ is a function
for which these stochastic integrals are well-defined in $L^q(\mathcal{O})$.
In order to see that the integrand
is well defined as an adapted $L^q(\mathcal{O};H)$-valued process we
note that
every bounded operator $T$ on $L^q(\mathcal{O})$ extends to a bounded operator
on $L^q(\mathcal{O};H)$ of the same norm (see \cite{Stein93}, Section~I.8.24);
on the dense subspace $L^q(\mathcal{O})\otimes H$
this extension is given by $T(f\otimes h) = Tf\otimes h$.

\begin{theorem}\label{thm:maxregLp} Let $q\in[2,\infty)$ and $\sigma
\in(0,\pi/2)$,
and suppose that $A$ has a bounded $H^\infty(\Sigma_\sigma)$-calculus
on $L^q(\mathcal{O})$. Let $S$ denote the bounded analytic semigroup
generated by
${-}A$. Set
\[
S_\theta(t) := \frac{t^{-\theta}}{\Gamma(1-\theta)} S(t).
\]
For all $p\in(2,\infty)$ and $\theta\in[0,\frac12)$ there exists a
constant $C\ge0$ such that
for all $G\in L_\F^p(\R_+\times\O;L^q(\mathcal{O};H))$ we have
$S_\theta
\diamond G(t)\in D(A^{\fraca12-\theta})$ almost surely for almost all
$t\ge0$ and
\[
\|A^{\fraca12-\theta} S_\theta\diamond
G\|_{L^p(\R_+\times\O;L^q(\mathcal{O}))}\leq C
\|G\|_{L^p(\R_+\times\O;L^q(\mathcal{O};H))}.
\]
This estimate also holds when $p=q=2$.
\end{theorem}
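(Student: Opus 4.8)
The plan is to reduce the estimate for $A^{\frac12-\theta}S_\theta\diamond G$ to an $R$-boundedness statement already established in Theorem~\ref{thm:J-Lq}, using the $H^\infty$-calculus only through the square function bounds of Proposition~\ref{prop:Hinftysquare} and the multiplier theorem of Proposition~\ref{prop:KW}. The key observation is that, after applying the It\^o isomorphism of Proposition~\ref{prop:NVW} pointwise in $\mathcal{O}$, the quantity $\|A^{\frac12-\theta}S_\theta\diamond G\|_{L^p(\R_+\times\O;L^q(\mathcal{O}))}^p$ is comparable to an expression of the form $\E\|t\mapsto(\int_0^t \|A^{\frac12-\theta}S_\theta(t-s)G(s)\|_H^2\,ds)^{1/2}\|_{L^p(\R_+;L^q(\mathcal{O}))}^p$, and I want to recognize the inner convolution against the operator-valued kernel $s\mapsto A^{\frac12-\theta}S_\theta(s)$ as built from the stochastic convolution operators $J(r)$ (equivalently $I(k)$) of Section~\ref{sec:Lq}.

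The mechanism for this is the Poisson representation of Lemma~\ref{lem:poisson} combined with the square function bound. Concretely, I would first write $A^{\frac12-\theta}S_\theta(t)$ (which is $\varphi_t(A)$ for $\varphi(z)=z^{\frac12-\theta}e^{-z}/\Gamma(1-\theta)$, rescaled) and note that $z\mapsto z^{\frac12-\theta}e^{-z}$ is bounded and analytic on a sector $\Sigma_{\alpha'}$ with $\alpha'<\pi$; applying Lemma~\ref{lem:poisson} at each point $s>0$ expresses $A^{\frac12-\theta}S_\theta(s)$ as an integral over rays $u e^{\pm i\alpha}$ of the resolvent-type quantities $f(ue^{\pm i\alpha})(A)$ against the kernel $k_\alpha(u,s)$. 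Choosing $\alpha\in(\sigma,\pi/2)$, the functions $z\mapsto (ue^{\pm i\alpha})^{\frac12-\theta}e^{-ze^{\pm i\alpha}}$ still lie in $H_0^\infty(\Sigma_\sigma)$ after dividing by $\sqrt u$ and rescaling, so Proposition~\ref{prop:Hinftysquare} gives square function control on these. The substitution from Lemma~\ref{lem:kalpha2} shows that, after the appropriate powers of $u$ are absorbed, $u\mapsto$ (small multiple of) $k_{\alpha,\theta}(u,\cdot)$ lies in the set $\mathcal{K}$, so each ray contributes an operator from the family $\mathcal{I}$. Averaging over $u$ with the finite measure $\sqrt u|k'|\,du$ and invoking convexity, $R$-boundedness of $\mathcal{I}$ from Theorem~\ref{thm:J-Lq} then produces the desired $L^p(L^q)$ bound via the square function / $R$-bound conversion (Proposition~\ref{prop:KW}). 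The fact that $R$-boundedness rather than mere uniform boundedness is needed is exactly because we must integrate a continuum of operators against a probability-like measure while keeping the Rademacher structure.

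More precisely, the steps I would carry out are: (1) fix $\theta\in[0,\frac12)$, set $\psi(z)=z^{\frac12-\theta}e^{-z}$, and verify $\psi\in H_0^\infty(\Sigma_{\sigma})$ (here $\theta<\frac12$ is what gives the decay $|z|^\varepsilon$ at $0$; at infinity $e^{-z}$ decays on any subsector of the right half plane); (2) apply Lemma~\ref{lem:poisson} with suitable $\alpha\in(\sigma,\pi/2)$ and $\alpha'\in(\alpha,\pi/2)$ to write $A^{\frac12-\theta}S_\theta(s)x = \sum_{j=\pm1}\frac{j}{2\alpha}\int_0^\infty k_\alpha(u,s)\,\psi_{u,j}(A)x\,du$ for appropriate $\psi_{u,j}\in H_0^\infty(\Sigma_\sigma)$ whose square function norms are bounded uniformly (after the $\sqrt u$ normalization) by Proposition~\ref{prop:Hinftysquare}; (3) plug this into $S_\theta\diamond G$, interchange the $du$-integral with the stochastic integral (stochastic Fubini, valid on finite rank step processes as in Section~\ref{sec:Lq}), and recognize the $s$-convolution of $G$ against $k_{\alpha,\theta}(u,\cdot)$ (times $\psi_{u,j}(A)$) as $\psi_{u,j}(A)$ composed with an operator $I(c\,k_{\alpha,\theta}(u,\cdot))\in\mathcal{I}$ by Lemma~\ref{lem:kalpha2}; (4) use Proposition~\ref{prop:KW} together with the $R$-boundedness of $\{\psi_{u,j}(A): u>0\}$ as maps into the square-function space — this is where the square function estimates of Proposition~\ref{prop:Hinftysquare} enter — and the $R$-boundedness of $\mathcal{I}$ from Theorem~\ref{thm:J-Lq}, to bound the $L^p(\R_+\times\O;L^q(\mathcal{O}))$-norm by $\int_0^\infty \sqrt u|k_\alpha'(u)|\,du\cdot C\|G\|_{L^p(\R_+\times\O;L^q(\mathcal{O};H))}$; (5) first establish everything for finite rank step processes $G$, then pass to general $G\in L_\F^p$ by density, which simultaneously yields that $S_\theta\diamond G(t)\in D(A^{\frac12-\theta})$ a.s.\ for a.e.\ $t$ since $A^{\frac12-\theta}S_\theta\diamond G$ is the $L^p$-limit of the corresponding quantities for step processes and $A^{\frac12-\theta}$ is closed; (6) handle $p=q=2$ separately by noting that in Hilbert space $R$-boundedness reduces to uniform boundedness, so the $p=q=2$ case of Theorem~\ref{thm:J-Lq} suffices.

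The main obstacle I anticipate is the bookkeeping in step (3)--(4): one must separate the operator part $\psi_{u,j}(A)$ (acting in $L^q(\mathcal{O})$, handled by the $H^\infty$-calculus square function estimate and the Kalton--Weis multiplier theorem) from the scalar stochastic convolution part $I(c\,k_{\alpha,\theta}(u,\cdot))$ (acting in the time variable and handled by Theorem~\ref{thm:J-Lq}), and then recombine them — the two families are $R$-bounded in their respective mapping spaces, and one needs that the composition of an $R$-bounded family with a fixed (or $R$-bounded) family is $R$-bounded, applied in the right order so that the square-function space $L^q(\mathcal{O};L^2(\R_+;H))$ appears as the intermediate space. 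A secondary subtlety is verifying the uniform $H_0^\infty$-bound on $\psi_{u,j}$ and checking that the constant $\int_0^\infty \sqrt u|k_\alpha'(u)|\,du$ is finite and that $k_{\alpha,\theta}(u,\cdot)$, suitably scaled, genuinely lands in $\mathcal{K}$ — but this is precisely what Lemma~\ref{lem:kalpha2} provides, so it is routine once the representation is set up. The restriction $\theta<\frac12$ is essential throughout: it is what keeps $\psi\in H_0^\infty$ near the origin, and correspondingly keeps the fractional power $A^{\frac12-\theta}$ inside the range where the square function estimate applies.
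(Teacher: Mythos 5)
Your skeleton coincides with the paper's proof up to a point (the Poisson representation of Lemma \ref{lem:poisson}, the stochastic Fubini theorem, the recognition via Lemma \ref{lem:kalpha2} that suitable multiples of $k_{\alpha,\theta}(u,\cdot)$ lie in $\mathcal{K}$ so that $I_{\alpha,\theta}(u)\in\mathcal{I}$, and the use of Proposition \ref{prop:KW} together with Theorem \ref{thm:J-Lq}), but the step in which you reassemble the $u$-integral is a genuine gap. After the stochastic Fubini theorem one is left with $A^{\frac12-\theta}S_\theta\diamond G(t)=\int_0^\infty V(u)\,I_{\alpha,\theta}(u)G(t)\,\frac{du}{u}$, an integral against the \emph{infinite} measure $\frac{du}{u}$ whose integrand is only uniformly bounded in norm; it cannot be controlled by ``averaging over $u$ with the finite measure $\sqrt u\,|k'|\,du$ and invoking convexity''. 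That finite-measure convexity device (with $\int_0^\infty\sqrt t\,|k'(t)|\,dt\le1$) acts in the $t$-variable and is exactly what reduces $\mathcal{I}$ to $\mathcal{J}$ in Proposition \ref{prop:Rboundredconvex}; it has no counterpart for the $u$-integration produced by the Poisson formula, and your claimed bound by $\int_0^\infty\sqrt u\,|k_\alpha'(u)|\,du\cdot C\|G\|$ conflates the two variables. The paper's mechanism is different and essential: the representation is arranged so that the operator part is a \emph{square}, $V(u)=c\sum_{j=\pm1}j(\varphi_j(uA))^2$ with $\varphi_j(z)=z^{\frac14-\frac12\theta}e^{-\frac12 ze^{ij\alpha}}$; one factor $\varphi_j(uA)$ is commuted through $I_{\alpha,\theta}(u)$ and estimated by Proposition \ref{prop:KW} plus the square function estimate for $A_H$, while the other factor is transferred by duality to a functional $\zeta^*\in E_2^*$ and estimated by the \emph{dual} square function estimate for $A^*$; the convergence of the $\frac{du}{u}$-integral then comes from Cauchy--Schwarz in $L^2(\R_+,\frac{du}{u})$ applied pointwise on $\R_+\times\O\times\mathcal{O}$. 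Your proposal never splits the function into a product, never introduces the dual pairing, and never invokes the second inequality of Proposition \ref{prop:Hinftysquare}; as Remark \ref{rem:Hilbert} notes, whether the first square function estimate alone suffices is an open problem, so this omission cannot be repaired by routine bookkeeping along the lines you describe.

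A secondary error is the choice of angles: the rays $ue^{\pm i\alpha}$ must lie in the sector of boundedness of $z\mapsto S(z)$ and the functions $z\mapsto z^{\frac12-\theta}e^{-ze^{\pm i\alpha}}$ (or their square roots) must belong to $H^\infty_0$ on a sector on which $A$ has its calculus; this forces $0<\alpha<\alpha'<\frac12\pi-\sigma$ as in the paper, not $\alpha\in(\sigma,\pi/2)$ --- with your choice $e^{-ze^{\pm i\alpha}}$ need not even be bounded on $\Sigma_\sigma$ once $\alpha+\sigma\ge\pi/2$. The remaining ingredients of your plan (reduction to adapted finite rank step processes and the closedness of $A^{\frac12-\theta}$ to pass to general $G$, and the case $p=q=2$ via the equivalence of $R$-boundedness and uniform boundedness in Hilbert space) are in order and agree with the paper.
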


\begin{pf}
By a density argument it suffices to consider $\F$-adapted finite step
processes
$G\dvtx \R_+\times\O\to D(A_H)$, where $A_H = A\otimes I_H$ is the
generator of the bounded analytic semigroup
$S(t)$ viewed as acting on $L^q(\mathcal{O};H)$. For such~$G$,
the process $A_H^{\fraca12}G$ takes values in $L^q(\mathcal{O};H)$.
By H\"{o}lder's inequality and~\eqref{eq:Mtype2}, $S_\theta\diamond
G(t)$ and $A^{\fraca12-\theta}S_\theta\diamond G(t)$
are well defined in $L^q(\mathcal{O})$ for each $t\in\R_+$, and both
processes are jointly measurable on $\R_+\times\O$ (see \cite{NVW3}, Proposition~A.1).
\vspace*{1pt}

The idea of the
proof is to reduce the estimation of $A^{\fraca12-\theta} S_\theta
\diamond G$
to an estimation of $k_{\alpha,\theta}(u, \cdot)\diamond(V(u)G)$, where
$k_{\alpha,\theta}$ are the scalar kernels introduced in Section \ref
{sec:Lq} and
$V(u)$ is a suitable
operator depending on $u$ and $A$. The latter is then estimated using the
$H^\infty$-calculus of $A$.\vspace*{1pt}

Fix $\theta\in[0,\frac12)$. We proceed in two steps.

\textit{Step 1}.
First we shall rewrite $A^{\fraca12-\theta}S_\theta\diamond G$ using
Lemma \ref{lem:poisson}.
Fix\vspace*{1pt} $0<\a<\a'<\frac12\pi-\sigma$. Since $z\mapsto S(z)$ is
analytic and bounded on $\Sigma_{\a'}$, it follows from
Lemma~\ref{lem:poisson} that for all $x\in D(A^{\fraca12})$,
%
\begin{equation}\label{eq:Spoisson}
 \quad \frac{1}{\Gamma(1-\theta)}(t-s)^{-\theta}A^{\fraca12-\theta}
S(t-s)x =
\int_0^\infty k_{\alpha,\theta}(u,t-s) V(u)x \, \frac{du}{u},
\end{equation}
where
\[
V(u) := \frac{1}{\Gamma(1-\theta)}\sum_{j\in\{-1,1\}} \frac
{j}{2\alpha}
(\varphi_j(u A))^2
\]
and $\varphi_{j}\in H^\infty_0(\Sigma_{\fracd12\pi-\alpha'})$ is
given by $\varphi_j(u) =
u^{\fraca14-\fracd12\theta} \exp(- \frac12 u e^{ij\alpha})$.
We write $I_{\alpha,\theta} = I(k_{\alpha,\theta})$ for the
operator as defined
by \eqref{eq:Ikoperator} with $k = k_{\alpha,\theta}$ as in Lemma~\ref{lem:kalpha2}.
By \eqref{eq:Spoisson} and the stochastic Fubini theorem we obtain,
for all $t\ge0$,
\begin{eqnarray*}
A^{\fraca12-\theta}S_\theta\diamond G(t)
& =&\int_0^t \int_0^\infty V(u)
k_{\alpha,\theta}(u,t-s) G(s) \, \frac{du}{u}\, d W_H(s)
\\ & =& \int_{0}^\infty V(u)   I_{\alpha,\theta}(u) G(t)\,
\frac{du}{u}.
\end{eqnarray*}

\textit{Step 2}.
Next we prove the estimate.
Let $E_1 = L^p(\R_+\times\O;L^q(\mathcal{O};H))$ and $E_2 =
L^p(\R_+\times\O;L^q(\mathcal{O}))$.
The space $L^q(\mathcal{O})$ is reflexive and therefore $E_2^* =
L^{p'}(\R_+\times\O;L^{q'}(\mathcal{O}))$ isometrically.
For all $\zeta^*\in
L^{p'}(\R_+\times\O;L^{q'}(\mathcal{O}))$ with $\frac1p+\frac
{1}{p'} = \frac1q+\frac{1}{q'} = 1$,
\begin{eqnarray*}
 &&  \langle A^{\fraca12-\theta}S_\theta\diamond G, \zeta^*\rangle
_{E_2} \\
&&  \qquad  =
\frac{1}{\Gamma(1-\theta)}\\
&&  \qquad \quad {}\times \sum_{j\in\{-1,1\}}
\frac{j}{2\alpha} \E\int_{0}^\infty    \biggl\langle\int
_{0}^\infty
(\varphi_j(u A))^2
I_{\alpha,\theta}(u)G(t) \, \frac{du}{u}, \zeta^*(t)
 \biggr\rangle_{L^q(\mathcal{O})} \, dt
\\ &&  \qquad =\frac{1}{\Gamma(1-\theta)}
\sum_{j\in\{-1,1\}} \frac{j}{2\alpha} \int_0^\infty
\langle
\varphi_j(u A)
I_{\alpha,\theta}(u) G, \varphi_j(u A^*)\zeta^*  \rangle_{E_2}
\frac{du}{u}
\\ &&  \qquad \stackrel{(*)}{=} \frac{1}{\Gamma(1-\theta)}
\sum_{j\in\{-1,1\}} \frac{j}{2\alpha}  \langle\varphi_j(u A)
I_{\alpha,\theta}(u) G, \varphi_j(u A^*)\zeta^*  \rangle
_{E_2(L^2(\R_+,\fraca{du}{u}))},
\end{eqnarray*}
where $\langle\cdot,\cdot\rangle_F$ denotes the duality pairing
between a
Banach space
$F$ and its dual $F^*$; the identity $(*)$ follows by writing out the
duality between the Banach function\vadjust{\goodbreak} spaces $E_2$ and $E_2^*$ as an
integral over $\R_+\times\O\times\mathcal{O}$ and then using the
Fubini theorem.
It follows that
\begin{eqnarray*}
|\langle A^{\fraca12-\theta}S_\theta\diamond G, \zeta^*\rangle|
& \leq&\frac{1}{\Gamma(1-\theta)}\sum_{j\in\{-1,1\}} \frac
{1}{2\alpha}
\|\varphi_j(u A) I_{\alpha,\theta}(u)G\|_{E_2(L^2(\R_+,\fraca{du}{u}))}
\\ &&\hphantom{\frac{1}{\Gamma(1-\theta)}\sum_{j\in\{-1,1\}}}
{}   \times\|\varphi_j(u A^*)\zeta^*\|_{E_2^*(L^2(\R
_+,\fraca{du}{u}))}.
\end{eqnarray*}
By Proposition \ref{prop:Hinftysquare} (applied ``pointwise'' in
$\R_+\times\O$),
\[
\|\varphi_j(u A^*)\zeta^*\|_{E_2^*(L^2(\R_+,\fraca{du}{u}))}
\leq C_1 \|\zeta^*\|_{E_2^*}.
\]
Since $\varphi_j(u A) I_{\alpha,\theta}(u)G =
I_{\alpha,\theta}(u)\varphi_j(u A_H) G$, from Proposition
\ref{prop:KW}
and another pointwise application of Proposition \ref{prop:Hinftysquare}
(this time for $A_H = A\otimes I_H$, noting that $A_H$
satisfies the assumptions of the proposition
if $A$ does) we obtain
\begin{eqnarray*}
 && \|\varphi_j(u A) I_{\alpha,\theta}(u)G\|_{E_2(L^2(\R_+,\fraca{du}{u}))}
\\ && \qquad  \leq R\bigl(I_{\alpha,\theta}(u)\dvtx  u\in\R\bigr) \|\varphi_j(u A_H)
G\|_{E_1(L^2(\R_+,
\fraca{du}{u}) )}
\\ && \qquad   \leq R\bigl(I_{\alpha,\theta}(u)\dvtx  u\in\R\bigr) C_2 \|G\|_{E_1}.
\end{eqnarray*}
By Lemma \ref{lem:kalpha2} the $R$-bound can be estimated by
\[
R\bigl(I_{\alpha,\theta}(u)\dvtx  u\in\R\bigr)\leq C_3 R(\mathcal{I}),
\]
and the latter is finite by Theorem \ref{thm:J-Lq}. We conclude that
\[
|\langle A^{\fraca12-\theta}S_\theta\diamond G, \zeta^*\rangle|
\leq\frac
{1}{\a\Gamma(1-\theta)} C_1 C_2 C_3
R(\mathcal{I}) \|G\|_{E_1}\|\zeta^*\|_{E_2^*}.
\]
Taking the supremum over all $\|\zeta^*\|_{E_2^*}\leq1$ it
follows that
\[
\|A^{\fraca12-\theta}S_\theta\diamond G\|_{E_2}\leq\frac{1}{\a
\Gamma(1-\theta)}C_1 C_2 C_3
R(\mathcal{I}) \|G\|_{E_1}.
\]
\upqed
\end{pf}

\begin{remark}\label{rem:fails}
As in \cite{Kry94}, Remark 2.1, one shows that the inequality in
Theorem \ref{thm:maxregLp}
fails for $p=q\in
[1,2)$.
In Section \ref{sec:counter} we prove that Theorem
\ref{thm:maxregLp} also fails for $p=2$ and $q\in(2,\infty)$.
\end{remark}

\begin{remark}\label{rem:MR-finiteT}
Stochastic maximal
$L^p$-regularity on bounded time intervals may be deduced from Theorem
\ref{thm:maxregLp} by considering processes $G$ with support in
$(0,T)\times\O$.
In this situation it suffices to know that $w+A$ has a~%
bounded $H^\infty$-calculus of angle less than $\pi/2$ for some $w\in
\R$ large enough,
and we obtain the inequality
\[
\|S\diamond G\|_{L^p((0,T)\times\O;D((w+A)^{\fraca12}))} \le
C e^{w T} \|G\|_{L^p((0,T)\times\O;L^q(\mathcal{O};H))}
\]
with the constant $C$ independent of $G$ and $T$. In particular,
injectivity of $A$ is not needed for this estimate. We leave the easy
details to the reader.\vspace*{-1pt}
\end{remark}

\begin{remark}\label{rem:Hilbert}
In the proof of Theorem \ref{thm:maxregLp} we used both inequalities of
Proposition \ref{prop:Hinftysquare}.
It is an open problem whether only the first one suffices.
For $p=q=2$ this is indeed the case.\vadjust{\goodbreak}
To see this, take $\varphi(z) = z^{1/2} \exp(-z)$ and assume that
\[
\|\varphi(tA) f\|_{L^2(\mathcal{O}; L^2(\R_+,\fraca{dt}{t}))} \leq
C\|f\|, \qquad f\in L^2(\mathcal{O}).
\]
Let $(h_n)_{n\geq1}$ be an orthonormal
basis for $H$. Then by the It\^{o} isometry, Fubini's theorem and the
Plancherel formula,
\begin{eqnarray*}
\|A^{\fraca12} S\diamond G\|_{L^{2}(\R_+\times\O;L^2(\mathcal{O}))}^2
& =&
\int_0^\infty\E\int_0^t \sum_{n\geq1} \|A^{\fraca12} S(t-s) G(s)
h_n\|^2_{L^2(\mathcal{O})} \, ds \, dt
\\ & =& \int_0^\infty\E\sum_{n\geq1} \int_0^\infty\|A^{\fraca
12} S(t) G(s)
h_n\|^2_{L^2(\mathcal{O})} \, dt \, ds
\\ & \leq& C^2 \int_0^\infty\E\sum_{n\geq1} \|G(s) h_n\|
^2_{L^2(\mathcal{O})}\,ds
\\ & =& C^2 \|G\|_{L^2(\R_+\times\O;L^2(\mathcal{O};H))}^2.
\end{eqnarray*}
\end{remark}

\section{\texorpdfstring{Proof of Theorem \protect\ref{thm:maxregintro}}{Proof of Theorem 1.2}}
\label{sec:maxreg2}

To prepare for the proof of Theorem \ref{thm:maxregintro} we start by
collecting
some results on sums of closed operators on a UMD Banach space $E$;
below we shall only need the case $E= L^q(\mathcal{O})$ with $q\in
(1,\infty)$.

Let~$A$ have a bounded $H^\infty$-calculus on $E$ of angle less than
$\pi/2$.
Let~$\mathscr{A}$ be the closed and densely defined
operator on $L^p(\R_+;E)$ with domain $D(\mathscr{A}) :=
L^p(\R_+;D(A))$ defined by
\[
(\mathscr{A} f)(t) := A f(t).
\]
Let $\mathscr{B}$ be the closed and densely defined operator on
$L^p(\R_+;E)$
with domain $D(\mathscr{B}) := H^{1,p}_0(\R_+;E)$ given by
\[
\mathscr{B} f := f'.
\]
Here $H^{\theta,p}_0(\R_+;E) = \{f\in H^{\theta,p}(\R_+;E)\dvtx
f(0)=0\}$,
where $H^{\theta,p}(\R_+;E)=\break [L^p(\R_+;E), H^{1,p}(\R_+;E)]_{\theta}$
is the Bessel potential space defined by complex interpolation.

The operators $\mathscr{A}$ and $\mathscr{B}$ have bounded imaginary powers
(see, e.g., \cite{Am}, Lemma~III.4.10.5), and by \cite{PrSo}, Theorems 4 and 5),
the operator
\[
\mathscr{C}:=\mathscr{A}+ \mathscr{B}, \qquad D(\mathscr{C}) :=
D(\mathscr{A})\cap D(\mathscr{B}),
\]
is closed and has bounded imaginary powers as well.
Furthermore, $\mathscr{C}$ is injective and has dense range, and
for all $\theta\in(0,1)$ one has (see, e.g., \cite{Brz2}, Proposition 3.1)
%
\begin{equation}\label{eq:Lambdainverse}
(\mathscr{C}^{-\theta} f)(t)= \frac{1}{\Gamma(\theta)}\int_0^t
(t-s)^{\theta-1} S(t-s) f(s) \, ds.
\end{equation}
Moreover, for all $\theta\in(0,1]$ one has
(see \eqref{eq:complexfract} and
\cite{Prussalshierboven}, Corollary~1)
%
\begin{equation}\label{eq:idLambdalambda}
D(\mathscr{C}^{\theta})
= L^p(\R_+;D(A^{\theta})) \cap H^{\theta,p}_0(\R_+;E).\vadjust{\goodbreak}
\end{equation}

\begin{pf*}{Proof of Theorem \ref{thm:maxregintro}}
By a density argument, it suffices to consider an arbitrary
$\F$-adapted finite rank step process $G\dvtx \R_+\times\O\to D(A_H)$, where
$A_H = A \otimes I_H$.

(1)
By the Da Prato--Kwapie\'{n}--Zabczyk factorization argument (see \cite
{Brz2} and \cite{DPZ}, Section 5.3, and references therein), using
\eqref{eq:Lambdainverse}, the stochastic Fubini theorem and the
equality
\[
\frac1{\Gamma(\theta)
\Gamma(1-\theta)}\int_r^t (t-s)^{\theta-1} (s-r)^{-\theta} \, ds = 1
\]
one obtains, for all $t\in\R_+$,
\[
\mathscr{C}^{-\theta} (A^{\fraca12-\theta} S_\theta\diamond G)(t)
= A^{\fraca12-\theta}S\diamond G(t)    \qquad \mbox{almost surely},
\]
and hence, by \eqref{eq:idLambdalambda} and Theorem \ref{thm:maxregLp},
\begin{eqnarray*}
\| A^{\fraca12-\theta} S\diamond G\|_{L^p(\O;H^{\theta,p}(\R
_+;L^q(\mathcal{O})))}
& \le&
\|\mathscr{C}^{\theta} (A^{\fraca12-\theta}S\diamond G)
(t)\|_{L^p(\R_+\times\O;L^q(\mathcal{O}))}
\\ & =&
\|(A^{\fraca12-\theta}S_\theta\diamond G)
(t)\|_{L^p(\R_+\times\O;L^q(\mathcal{O}))} \\
&
\leq& C\|G\|_{L^p(\R_+\times\O;L^q(\mathcal{O}
;H))}.
\end{eqnarray*}

(2) Let $\theta\in(\frac1p,\frac12)$. By \cite{Zacher05}, Theorem 3.6
(see also \cite{Am}, Theorem III.4.10.2, and
\cite{Prussalshierboven}, Proposition 3) there is a continuous embedding
\[
H^{\theta,p}(\R_+;L^q(\mathcal{O})) \cap L^p(\R_+;D(A^{\theta}))
\hookrightarrow \mathit{BUC}\biggl(\R_+;D_A\biggl(\theta-\frac1p,p\biggr)\biggr)
\]
of norm $K$. Here $\mathit{BUC}(\R_+;L^q(\mathcal{O}))$ denotes the Banach
space of all bounded uniformly continuous functions
from $\R_+$ to $L^q(\mathcal{O})$.
Combining this with the result of part (1), noting that
$\| S\diamond G\|_{L^p(\R_+\times\O;D(A^{\fraca12}))}  \leq  C
\|G\|_{L^p(\R_+\times\O;L^q(\mathcal{O};H))}$
by Theorem \ref{thm:main} and the fact that $0\in\varrho(A)$,
\begin{eqnarray*}
 && \|A^{\fraca12-\theta}S\diamond G\|_{L^p(\O;\mathit{BUC}(\R_+;D_A(\theta
-\fraca1p,p)))}
\\ && \qquad  \leq K \max
 \bigl\{
\|A^{\fraca12-\theta}S\diamond G\|_{L^p(\O;H^{\theta,p}(\R
_+;L^p(\mathcal{O})))},\\
&& \hphantom{\qquad\leq K \max
 \bigl\{}\hspace*{9.5pt}
\|A^{\fraca12-\theta}S\diamond G\|_{L^p(\O;L^p(\R_+;D(A^{\theta
})))} \bigr\}
\\ && \qquad  \leq C K\|G\|_{L^p(\R_+\times\O;L^q(\mathcal{O};H))}.
\end{eqnarray*}
Hence by \cite{Tr1}, Theorem 1.15.2(e),
\begin{eqnarray*}
&&\|S\diamond G\|_{L^p(\O;\mathit{BUC}(\R_+;D_A(\fraca12-\fraca1p,p)))} \\
&& \qquad \eqsim
_{A,\theta,p}
\|A^{\fraca12-\theta}S\diamond G\|_{L^p(\O;\mathit{BUC}(\R_+;D_A(\theta
-\fraca1p,p)))} \\ &&  \qquad  \leq CK
\|G\|_{L^p(\R_+\times\O;L^q(\mathcal{O};H))}.
\end{eqnarray*}
\upqed
\end{pf*}

\begin{remark}
A standard stopping time argument (see, e.g., \cite{DPZ}, Proposition~4.16)
shows that Theorem
\ref{thm:maxregintro} can be localized. For instance,
from Theorem~\ref{thm:maxregintro}(2) one can\vadjust{\goodbreak} infer that for all
$G\in L_{\F}^0(\O;L^p(\R_+;L^q(\mathcal{O};H)))$ one has
\[
S\diamond G\in L^0\biggl(\O;\mathit{BUC}\biggl(\R_+;D_A\biggl(\frac12-\frac1p, p\biggr)\biggr)\biggr).
\]
Here $L^0(\O;E)$ denotes the space of strongly measurable functions on
$\O$ with values in a Banach space $E$.
\end{remark}

\begin{remark} Arguing as in Remark \ref{rem:MR-finiteT},
also Theorem \ref{thm:maxregintro} admits a~version for bounded time intervals.
\end{remark}

\begin{remark} As has been pointed out in Remark \ref{rem:UMD},
the role of $L^q(\mathcal{O})$ in Proposition \ref{prop:NVW}
can be taken over by an arbitrary UMD Banach space~$E$.
We do not know, however, whether Theorems \ref{thm:J-Lq} and
Proposition \ref{prop:FS} can be extended
to UMD Banach spaces $E$, say with (martingale) type $2$ in order to
rule out the spaces
$L^q(\mathcal{O})$ with $q\in(1,2)$ for which Theorem \ref{thm:main}
is known to be false (see Remark~\ref{rem:fails}).
\end{remark}

\section{A counterexample to stochastic maximal $L^2$-regularity}\label{sec:counter}

We show next that Theorem \ref{thm:main} is not valid with
$p=2$ and $q\in(2,\infty)$, even when $H=\R$ and $G$ is deterministic.
Stated differently, analytic generators on $L^q(\mathcal{O})$ do
not always enjoy stochastic
maximal $L^2$-regularity for $q\in(2,\infty)$. This
is rather surprising, since stochastic maximal $L^2$-regularity for
Hilbert spaces [in particular, for $L^2(\mathcal{O})$] is easy
to prove (see Remark \ref{rem:Hilbert}).

In the next theorem, $W$ denotes a real-valued Brownian motion.

\begin{theorem}
Let $q\in(2,\infty)$ and
fix an increasing sequence $0<\l_1< \l_2<\cdots $ diverging to $\infty$.
Let $A$ be the diagonal operator on $\ell^q$ defined by $A e_k :=
\lambda_k e_k$ with its maximal domain.
Then $A$ has a bounded $H^\infty$-calculus of zero angle, but there
does not exist a constant $C$ such that for
all $G\in L^2(\R_+;\ell^q)$,
%
\begin{equation}\label{eq:assumpstoch}
\int_0^\infty\E \biggl\|\int_0^t A^{\fraca12} S(t-s) G(s) \, d
W(s) \biggr\|^2_{\ell^q} \, dt \leq C^2 \int_0^\infty\|G(t)\|^2_{\ell
^q} \, dt.
\end{equation}
\end{theorem}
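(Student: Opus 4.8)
The plan is to compute both sides of \eqref{eq:assumpstoch} explicitly by diagonalizing, reducing the question to a scalar inequality in each coordinate, and then to exhibit a choice of $G$ making the right side finite but the left side infinite. The boundedness of the $H^\infty$-calculus of zero angle for a diagonal operator with positive eigenvalues on $\ell^q$ is standard: $\varphi(A)$ acts diagonally by $e_k \mapsto \varphi(\lambda_k)e_k$, so $\|\varphi(A)\|_{\calL(\ell^q)} \le \sup_k|\varphi(\lambda_k)| \le \|\varphi\|_{H^\infty(\Sigma_\sigma)}$ for every $\sigma>0$; one records this at the outset. The semigroup is $S(t)e_k = e^{-\lambda_k t}e_k$.

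Next I would fix a deterministic $G(s) = \sum_k g_k(s)e_k$ with $g_k \in L^2(\R_+)$. Since $W$ is real-valued and $G$ deterministic, the stochastic integral $\int_0^t A^{\frac12}S(t-s)G(s)\,dW(s)$ is a Gaussian random variable in $\ell^q$ with independent coordinates; its $k$-th coordinate is $\int_0^t \lambda_k^{1/2}e^{-\lambda_k(t-s)}g_k(s)\,dW(s)$, which is centred Gaussian with variance $v_k(t):=\lambda_k\int_0^t e^{-2\lambda_k(t-s)}|g_k(s)|^2\,ds$. Using the fact that for a Gaussian vector $X=(X_k)$ with independent coordinates $\E\|X\|_{\ell^q}^q = c_q\sum_k (\E|X_k|^2)^{q/2}$ (and the Kahane--Khintchine equivalence of $L^q$- and $L^2$-moments of Gaussians), the left side of \eqref{eq:assumpstoch} is comparable to
\[
\int_0^\infty \Big(\sum_k v_k(t)^{q/2}\Big)^{2/q}\,dt,
\]
while the right side equals $C^2\int_0^\infty \big(\sum_k |g_k(t)|^{q}\big)^{2/q}\,dt$ — wait, the right side is $\int_0^\infty \|G(t)\|_{\ell^q}^2\,dt = \int_0^\infty (\sum_k |g_k(t)|^q)^{2/q}\,dt$. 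The inequality to be refuted is therefore a mixed-norm inequality $\big\| (v_k(t)^{1/2})_k \big\|_{L^2(\R_+;\ell^q)} \lesssim \big\|(g_k(t))_k\big\|_{L^2(\R_+;\ell^q)}$.

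The main obstacle — and the crux of the construction — is to choose the $g_k$ and the eigenvalues $\lambda_k$ so that this mixed-norm inequality fails. The natural idea is to concentrate each $g_k$ on a time-interval matched to the scale $1/\lambda_k$, say $g_k = \lambda_k^{1/2}\one_{(0,1/\lambda_k]}$ or $g_k$ supported on disjoint blocks, and to exploit that the $\ell^q$-norm with $q>2$ is much smaller than the $\ell^2$-norm for spread-out sequences: on the input side the supports are (nearly) disjoint in $t$ so the $\ell^q$-structure is invisible, whereas the outputs $v_k(t)^{1/2}$ have heavy tails in $t$ (the $k$-th coordinate decays only like $e^{-\lambda_k t}$ but the transient produces an $L^2(\R_+)$ mass of order $1$), so that at a fixed large $t$ many coordinates contribute simultaneously and the $\ell^q$-gain on the right cannot be recovered on the left. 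Concretely one takes $\lambda_k$ lacunary (e.g. $\lambda_k = 2^k$) and $g_k$ a bump at scale $\lambda_k^{-1}$ normalised so that $\|g_k\|_{L^2(\R_+)}\approx 1$; then one checks $\int_0^\infty \|G(t)\|_{\ell^q}^2\,dt \lesssim \sum_k \|g_k\|_{L^2}^2 \cdot(\text{bounded overlap}) \approx N$ after truncating to $k\le N$, while $v_k(t)\gtrsim e^{-2\lambda_k t}$ for $t\ge \lambda_k^{-1}$ forces $\sum_k v_k(t)^{q/2}$ to stay of order (number of active $k$)$\cdot$(something), yielding a left side of order $N^{2/q}\cdot N^{1-2/q}\cdot(\ldots)$ that grows strictly faster than $N$; letting $N\to\infty$ gives a counterexample. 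The delicate point is tuning the normalisations so the growth exponents genuinely separate, which is where the condition $q>2$ enters decisively (for $q=2$ the two sides coincide by Plancherel, recovering Remark~\ref{rem:Hilbert}).
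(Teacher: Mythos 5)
Your reduction is fine as far as it goes, and it matches the paper's first step: the $H^\infty$-calculus claim is routine, and for deterministic $G$ the left-hand side of \eqref{eq:assumpstoch} is comparable to $\int_0^\infty\big(\sum_{k\ge1}v_k(t)^{q/2}\big)^{2/q}\,dt$ with $v_k(t)=\lambda_k\int_0^te^{-2\lambda_k(t-s)}|g_k(s)|^2\,ds$ (the paper gets exactly this from Proposition \ref{prop:NVW}). Your parenthetical claim that the coordinates of the stochastic integral are independent is incorrect --- they are all driven by the same scalar Brownian motion --- but this is harmless, since you only use that each coordinate is centred Gaussian together with the Gaussian moment equivalences. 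The genuine gap is that you never disprove the resulting mixed-norm inequality, and the construction you sketch does not do so. Take $\lambda_k=2^k$ and $g_k=2^{k/2}\one_{(0,2^{-k}]}$ for $k\le N$: for $t\eqsim 2^{-j}$ with $1\le j\le N$ one has $v_k(t)\eqsim 2^{2k-j}$ for $k\le j$ and $v_k(t)\lesssim 2^ke^{-2^{k-j}}$ for $k>j$, so $\big(\sum_{k\le N}v_k(t)^{q/2}\big)^{2/q}\eqsim 2^{j}$, and the remaining ranges of $t$ contribute $O(1)$; integrating gives a left-hand side $\eqsim N$, which is exactly the size of the right-hand side. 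Placing the bumps on disjoint time blocks does not help either: at each time essentially one coordinate dominates, and again both sides are $\eqsim N$. So there is no separation of exponents, the asserted growth ``strictly faster than $N$'' does not materialize, and the ``delicate tuning'' you defer is precisely the whole difficulty.

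This is not a matter of adjusting normalisations: disproving the inequality is essentially equivalent to disproving deterministic maximal $L^1$-regularity of a diagonal operator on $\ell^{q/2}$, which is a nontrivial theorem rather than an explicit lacunary computation. That is how the paper proceeds: restricting to nonnegative inputs and substituting $g_k=\sqrt{f_k^{\pm}}$, the inequality you derived becomes
\[
\int_0^\infty\Big\|\int_0^t Be^{-(t-s)B}f(s)\,ds\Big\|_{\ell^{q/2}}\,dt\le C\int_0^\infty\|f(s)\|_{\ell^{q/2}}\,ds,\qquad B=2A,
\]
i.e.\ maximal $L^1$-regularity of $B$ on $\ell^{q/2}$, and this fails for every $q\in(2,\infty)$ by the result of Guerre-Delabri\`ere \cite{Guerre}. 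This squaring reduction is also exactly where $q>2$ enters (through $q/2>1$), consistent with the validity of the estimate for $q=2$. The substitution $g_k=\sqrt{f_k^{\pm}}$ and the appeal to the known failure of maximal $L^1$-regularity are the missing ideas; without them, or a counterexample of comparable depth, your argument does not establish the theorem.
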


\begin{pf}
The verification that $A$ has a bounded $H^\infty$-calculus of zero
angle is
routine.

By Proposition \ref{prop:NVW}, the estimate \eqref{eq:assumpstoch} is
equivalent to
%
\begin{eqnarray}\label{eq:stochest}
&&\E\int_0^\infty \biggl(\sum_{k\geq1}
\biggl (\int_0^t \lambda_k e^{-2\lambda_k(t-s)}   |g_k(s)|^2 \,
ds \biggr)^{q/2} \biggr)^{2/q} \, dt\nonumber
\\[-8pt]
\\[-8pt]
&& \qquad  \leq C^2_1 \int_0^\infty
\biggl(\sum_{k\geq
1}|g_k(t)|^q \biggr)^{2/q} \, dt,
\nonumber
\end{eqnarray}
where $C_1$ is a different constant independent of $G = (g_k)_{k\geq1}$.
We claim that this inequality implies deterministic maximal
$L^{1}$-regularity for
the operator $B = 2A$ on the space $\ell^{q/2}$, by which we mean that
there is a constant~$C_2$ such that
\[
 \biggl\|\int_0^t B e^{-(t-s)B} f(s) \, ds \biggr\|_{L^{1}(\R_+;\ell
^{q/2})}\leq
C_2 \|f\|_{L^{1}(\R_+;\ell^{q/2})}
\]
for all $f = (f_k)_{k\ge1}$ in $L^{1}(\R_+;\ell^{q/2})$. This
inequality is equivalent to
%
\begin{eqnarray}\label{eq:detest}
&& \int_0^\infty \biggl(\sum_{k\geq
1}  \biggl|\int_0^t 2\lambda_k   e^{-2\lambda_k(t-s)} f_k(s) \,
ds \biggr|^{q/2} \biggr)^{2/q} \, dt\nonumber
\\[-8pt]
\\[-8pt] && \qquad  \leq C_2 \int_0^\infty
 \biggl(\sum_{k\geq1}|f_k(s)|^{q/2}  \biggr)^{2/q} \, ds.
\nonumber
\end{eqnarray}
To see that \eqref{eq:detest} follows from \eqref{eq:stochest}, we
may reduce to
nonnegative $f$ by considering positive and negative parts of each
$f_k$ separately.
Then \eqref{eq:detest} follows by taking $g_k = \sqrt{f_k^{\pm}}$ in
\eqref{eq:stochest}.

Now the theorem follows from
\cite{Guerre}, where it is shown that $B$
fails maximal $L^1$-regularity on $\ell^{q/2}$ with $q\in(2,\infty)$.
\end{pf}

Of course, by Theorem \ref{thm:main} the operator $A$ of this example
has stochastic maximal
$L^p$-regularity for $p\in(2,\infty)$.

\section{Discussion}\label{sec:discussion}
We have already compared Theorem \ref{thm:main} with Krylov's inequalities
\eqref{eq:Kry} and \eqref{eq:Kry-pq} in the \hyperref[sec1]{Introduction}.
Theorem \ref{thm:main} also extends various other regularity results
in the
literature.

\subsection{Hilbert spaces}
For generators ${-}A$ of bounded strongly continuous analytic semigroups
on Hilbert
spaces, stochastic maximal $L^2$-regularity was proved by Da Prato
(see \cite{DPZ}, Section 6.3, and references therein)
under the assumption
$D(A^{\theta}) = D_A(\theta, 2)$ for all $\theta\in(0,1).$
This condition is equivalent to the existence of a bounded $H^\infty$-calculus
of angle less than $\frac12\pi$ for $A$ (see~\cite{Haase2}, Remark~6.6.10,
and \cite{KuWe}, Theorem 11.9). Thus, the case $p=q=2$ of
Theorem \ref{thm:main} contains Da Prato's result (see also Remark~\ref{rem:Hilbert}). For $p\in(2,\infty)$, Theorem
\ref{thm:main} seems to be new even in the Hilbert space case, that
is, $q=2$.
Similarly, only the case $p=2$ of Theorem \ref{thm:maxregintro}(2)
is known in the Hilbert space case (see \cite{DPZ}, Section~6.2, and note that $D_A(\frac12,2)=D(A^{1/2})$ when $E$ is a
Hilbert space;
here we should mention the fact that analytic contraction semigroups
on Hilbert spaces have a bounded $H^\infty$-calculus of angle less
than $\pi/2$
\cite{KuWe}, Theorem 11.13).
As observed in \cite{Fla92}, the above mentioned assumption in Da
Prato's result can be\vspace*{1pt} weakened to $D(A^{\fraca12}) \supseteq D_A(\frac
12, 2)$.

\subsection{Martingale type $2$ spaces}\label{subsec:discussion}
For $p=2$, the related estimate
%
\begin{equation}\label{eq:maxregineqintro}
\E\|U\|_{L^p(0,T;D(A))}^2\le C^2
\E\|G\|_{L^p(0,T;D_{A_H}(\fraca12,2))}^2
\end{equation}
was obtained by Brze\'{z}niak \cite{Brz1} for so-called $M$-type $2$
Banach spaces
$E$ [this class includes the $L^q$-spaces for $q\in[2,\infty)$].
If $A$ is a second order elliptic operator on a space $E = L^q(\R^d)$,
one typically
has
\[
D_{A_H}\bigl(\tfrac12,2\bigr) =
B_{q,2}^1(\R^d;H) \subseteq
H^{1,q}(\R^d;H)
= D(A_H^\fraca12),
\]
where the middle inclusion, being a consequence of
\cite{Tr1}, Remark~2.3.3/4 and Theorem~4.6.1, is strict for $q\in(2,\infty)$.
Similar reasoning applies in the case of bounded regular domains in $\R^d$.

As a consequence, the inequality \eqref{eq:maxregineqintro} is weaker than
the one which follows from \eqref{eq:delta} (with $\delta = \frac12$),
\begin{equation}\label{eq:ours} \E \|U\|_{L^p(0,T;D(A))}^2\le
C^2\E\|G\|_{L^p(0,T;D(A_H^\fraca12))}^2.
\end{equation}
More importantly, the fact that the real interpolation spaces
$D_{A_H}(\tfrac12,2)$ are Besov spaces causes difficulties in the treatment
of nonlinear problems (as was noted in \cite{Brz1,Kry}). Such problems can
be avoided if one uses the inequality~\eqref{eq:ours} instead.

\subsection{Real interpolation spaces}
For analytic generators ${-}A$ on $M$-type $2$ spaces $E$, stochastic maximal
$L^p$-regularity for $p\in[2,\infty)$ in the
real interpolation spaces $D_A(\theta, p)$ for $\theta\in[0,1)$ was
proved by
Da Prato and Lunardi~\cite{DPL} (see also~\cite{BrzHau});
the solution $U$ then belongs to
$L^p(0,T;{D}_A(\theta+\frac12,p))$.
With $\theta=\frac12$ this gives the estimate
%
\begin{equation}\label{eq:DPL}
\E\|U\|_{L^p(0,T;D_A(1,p))}^2\le C^2\E\|G\|_{L^p(0,T;D_{A_H}(\fraca12,p))}^2.
\end{equation}
Comparing with \eqref{eq:ours},
this time the applicability is limited by
the observation that the solution space
$D_A(1,p)$ may be larger than $D(A)$ when $E=L^q(\mathcal{O})$ with
\mbox{$q\ge p$}. This happens, for instance,
in the special case where~$A$ is a second order elliptic operator on
$E=L^q(\R^d)$
with $q\in(2,\infty)$. Taking $p=q$ one has
\[
D_A(1,q) = B^2_{q,q}(\R^d)
\supsetneqq H^{2,q}(\R^d) = D(A).
\]
Again similar reasoning applies in the case of bounded regular domains in~$\R^d$.

When comparing the results of \cite{DPL} with ours, it should be noted
that if
${-}A$ is invertible and generates a bounded strongly continuous analytic
semigroup
on a Banach space $E$, then by a result of Dore (see, e.g.,
\cite{Haase2}, Corollary 6.5.8) $A$
admits a bounded $H^\infty$-functional calculus of angle less that
$\pi/2$
on the spaces $D_A(\theta,p)$
for all $\theta\in(0,1)$ and $p\in(1,\infty)$. Hence, at least for
$E=L^q(\mathcal{O})$, estimate \eqref{eq:DPL} is also contained in
Theorem \ref{thm:main}.

\section*{Acknowledgments}
We thank Wolfgang Desch, Stig--Olof Londen and the anonymous referee
for careful reading.

%

\printaddresses

\end{document}